\newif\ifPDF
\tikzset{EdgeStyle/.style = {->}}
\tikzset{LabelStyle/.style= {fill=yellow}}
\theoremstyle{definition}
\newtheorem{definition}{Definition}[section]
\theoremstyle{remark}
\newtheorem{remark}[definition]{Remark}
\theoremstyle{plain}
\newtheorem{thm}[definition]{Theorem}
\newtheorem{theorem}[definition]{Theorem}
\newtheorem{prop}[definition]{Proposition}
\newtheorem{lemma}[definition]{Lemma}
\newtheorem{corol}[definition]{Corollary}
\def\ol{\overline}
\def\ov{\overline}
\def\tl{\widetilde}
\def\wh{\widehat}
\newcommand{\be}{\begin{equation}}
\newcommand{\ee}{\end{equation}}
\newcommand{\ba}{\begin{aligned}}
\newcommand{\ea}{\end{aligned}}
\numberwithin{equation}{section}
\newcommand{\ignore}[1]{}
\newcommand{\ie}{\emph{i.e.}}
\begin{document}

\title[Measures and dynamics on Pascal-Bratteli diagrams]{Measures and dynamics on Pascal-Bratteli diagrams}


\author{Sergey Bezuglyi}
\address{Department of Mathematics,
University of Iowa, Iowa City, IA 52242-1419
USA}
\email{sergii-bezuglyi@uiowa.edu}

\author{Artem Dudko}
\address{Institute of Mathematics of the Polish Academy of Sciences, ul. \'Sniadeckich 8, 00-656 Warsaw, Poland \&
B. Verkin Institute for Low Temperature Physics and Engineering,
47~Nauky Ave., Kharkiv, 61103, Ukraine}

\email{adudko@impan.pl}

\author{Olena Karpel}
\address{AGH University of Krakow, Faculty of Applied Mathematics, al. Adama Mickiewicza~30, 30-059 Krakow, Poland \&
B. Verkin Institute for Low Temperature Physics and Engineering,
47~Nauky Ave., Kharkiv, 61103, Ukraine}

\email{okarpel@agh.edu.pl}

\begin{abstract}
We introduce and study dynamical systems and measures on stationary generalized Bratteli diagrams $B$ that are 
represented as the union of countably many classical 
Pascal-Bratteli diagrams. We describe all ergodic tail invariant measures on $B$. For every probability tail invariant measure $\nu_p$ on the classical Pascal-Bratteli diagram, we approximate 
the support 
of $\nu_p$ by the path space of a subdiagram. By considering various orders on the edges of $B$,
we define dynamical systems with various properties. 
We show that there exist orders such that the sets of infinite maximal and infinite minimal paths are empty. This implies that the corresponding Vershik map is a homeomorphism.
We also describe orders on both $B$ and the classical Pascal-Bratteli diagram that generate either uncountably many minimal infinite and uncountably many maximal infinite paths, or uncountably many minimal infinite paths alongside countably infinitely many maximal infinite paths.
 
\end{abstract}

\maketitle








\section{Introduction}

We have initiated the study of generalized Bratteli diagrams in a series of recent works (see, e.g., \cite{BezuglyiDooleyKwiatkowski2006, BezuglyiJorgensenSanadhya2024, BezuglyiJorgensenKarpelSanadhya2023, BezuglyiKarpelKwiatkowski2024, BezuglyiKarpelKwiatkowskiWata2024}). In this paper, we focus on investigating invariant measures and dynamics for both the standard Pascal-Bratteli diagram and a broader class of generalized Bratteli diagrams, which can be represented as countable unions of classical Pascal graphs.

Generalized Bratteli diagrams serve as models for Borel automorphisms of standard Borel spaces. The tail equivalence relation on the path space of a generalized Bratteli diagram describes the dynamical properties of the corresponding automorphisms. These diagrams feature countably infinite vertices at each level, and their path spaces are non-compact Polish spaces. The corresponding incidence matrices are also infinite. These characteristics introduce new challenges in studying invariant measures and dynamics for generalized Bratteli diagrams, compared to the standard Bratteli diagrams typically used in Cantor dynamics. 

The set of various classes of generalized Bratteli diagrams is large and significantly different from that of standard Bratteli diagrams. For example, there are no simple generalized Bratteli diagrams, although the tail equivalence relation can be minimal for some diagrams.

The class of stationary standard Bratteli diagrams plays an important role in this area, as they provide models for substitution dynamical systems on finite alphabets \cite{Forrest1997, Durand_Host_skau_1999, BezuglyiKwiatkowskiMedynets2009}. The set of invariant measures for stationary standard Bratteli diagrams is fully described in \cite{BezuglyiKwiatkowskiMedynetsSolomyak2010}. However, for stationary generalized Bratteli diagrams, the situation is much more complex. The results in \cite{BezuglyiKwiatkowskiMedynetsSolomyak2010} no longer apply because generalized Bratteli diagrams can admit infinite $\sigma$-finite measures that assign finite values to all cylinder sets (see \cite{BezuglyiKarpelKwiatkowski2024}). It was shown in \cite{BezuglyiJorgensenSanadhya2024} that stationary generalized Bratteli diagrams serve as models for a class of substitution dynamical systems on infinite alphabets. Substitutions on infinite alphabets have also been studied in works such as \cite{Ferenczi2006, Manibo_Rust_Walton_2022}, and \cite{Frettloh_Garber_Manibo_2022}

Pascal-Bratteli diagrams (also called Pascal diagrams, Pascal graphs, Pascal adic systems) have been studied in numerous papers, see, for example, \cite{Boca2008, FrickOrmes2013, FrickPetersen2010, Fricketal2017, MelaPetersen2005, Mela2006, Mundici2011, Strasser2011, Vershik2014}. In \cite{BezuglyiKarpelKwiatkowskiWata2024}, $\mathbb{N}$-infinite and $\mathbb{Z}$-infinite generalized Pascal-Bratteli diagrams were studied, and the set of probability ergodic invariant measures for these diagrams was completely described.

In this paper, we study ergodic invariant probability measures on standard Pascal-Bratteli diagrams using vertex subdiagrams. We describe subdiagrams of the standard Pascal-Bratteli diagram that are pairwise disjoint for different ergodic invariant probability measures and can be chosen so that the corresponding ergodic measure of their path space is arbitrarily close to 1.

We introduce stationary generalized Bratteli diagrams $B$, which are represented as the union of countably many standard Pascal-Bratteli diagrams, and describe all ergodic invariant probability measures on $B$. Additionally, we study various orders on both the standard Pascal-Bratteli diagram and the generalized Bratteli diagram $B$, which contains infinitely many classical Pascal-Bratteli diagrams. We introduce an order on the standard Pascal-Bratteli diagram that has uncountably many minimal infinite and uncountably many maximal infinite paths, and an order that has uncountably many minimal infinite and countably infinitely many maximal infinite paths. Conversely, we show that $B$ can be endowed with an order that has no minimal infinite or maximal infinite paths (in which case the corresponding Vershik map is a homeomorphism), as well as with an order that has uncountably many minimal infinite and uncountably many maximal infinite paths and an order that has uncountably many minimal infinite and countably infinitely many maximal infinite paths.

The outline of the paper is as follows.
In Section~\ref{Sect:prelim}, we provide all necessary definitions and recall the procedure for measure extension from a subdiagram.
 In Section \ref{Sect:pos_meas_sbd_standard_Pascal}, we describe subdiagrams $X_p$ of a standard Pascal-Bratteli diagram such that $\{X_p\}_{p\in (0,1)}$ are pairwise disjoint for different ergodic probability invariant measures $\nu_p$, and can be chosen so that the measure $\nu_p (X_p)$ is arbitrary close to $1$. 
Section \ref{Sect:X_max_X_min} is devoted to various orders on the standard Pascal-Bratteli diagram. We construct an order on the standard Pascal-Bratteli diagram that has a continuum of minimal infinite paths and a continuum of maximal infinite paths. We also answer the question posed in \cite{Fricketal2017} and construct an order on the standard Pascal-Bratteli diagram that has a continuum of minimal infinite paths and countably infinitely many maximal infinite paths. 
In Section~\ref{Sect:meas_GBD_Pascal},
we consider a stationary generalized Bratteli diagram $B$ which contains countably many standard Pascal-Bratteli diagrams as vertex subdiagrams.
We describe all probability ergodic invariant measures on $B$. Using the results from Section \ref{Sect:X_max_X_min}, we show that $B$ can be ordered so that it has
 a continuum of minimal infinite paths and a continuum of maximal infinite paths, thus providing an example of a stationary generalized Bratteli diagram which has such a property. We also construct an order on $B$ that has uncountably many minimal infinite and countably infinitely many maximal infinite paths. We also demonstrate that there is an order on the two-sided version of $B$ that has no maximal infinite paths and no minimal infinite paths; thus, the corresponding Vershik map is a homeomorphism. 

Our main results are contained in Theorems \ref{thm: measure supports}, \ref{thm: continuum Xmin}, \ref{thm: continuum Xmin countable Xmax}, \ref{Thm:all_mu_on_GBD_Pascal}.

\section{Preliminaries}\label{Sect:prelim} 
In this section, we briefly remind the reader of the definitions  
of main objects considered in the paper. 

\subsection{Standard and generalized Bratteli diagrams}

Standard Bratteli diagrams and Vershik maps on them are models for 
homeomorphisms of a Cantor set \cite{HPS1992, Medynets2006, 
DownarowiczKarpel_2019}. 

\begin{definition}
A {\it standard Bratteli diagram} is an infinite graded graph $B=(V,E)$ such that the vertex
set $V =\bigsqcup_{i\geq 0}V_i$ and the edge set $E=\bigsqcup_{i\geq 0}E_i$
are partitioned into disjoint subsets $V_i$ and $E_i$, where

(i) $V_0=\{v_0\}$ is a single point;

(ii) $V_i$ and $E_i$ are finite sets for all $i$;

(iii) there exists a range map $r \colon E \rightarrow V$ and a source map $s \colon E \rightarrow V$ such that $r(E_i)= V_{i+1}$ and $s(E_i)= V_{i}$ for all $i \geq 1$.
\end{definition}

A generalized Bratteli diagram is a natural extension of the notion of a standard Bratteli diagram. Generalized Bratteli diagrams have countably many vertices on each level and provide models for Borel automorphisms of standard Borel spaces, \cite{BezuglyiDooleyKwiatkowski2006,
BezuglyiJorgensenSanadhya2024}.

\begin{definition}\label{Def:generalized_BD} A 
\textit{generalized Bratteli diagram} is an infinite graded graph 
$B = (V, E)$ such that the 
vertex set $V$ and the edge set $E$ can be partitioned $V = \bigsqcup_{i=0}^\infty  V_i$ and $E = 
\bigsqcup_{i=0}^\infty  E_i$ so that the following properties hold (there is no need to assume that $V_0$ is a singleton): 

(i) For every $i \in \mathbb{Z}_+$, the number of vertices at each level 
$V_i$ is countably infinite, and the set $E_i$
of all edges between $V_i$ and $V_{i+1}$ is countable.

(ii) For every edge $e\in E$, we define the 
range and 
source maps $r$ and $s$ such that $r(E_i) = V_{i+1}$ and 
$s(E_i) = V_{i}$ for $i \in \mathbb{Z}_+$.

(iii) For every vertex $v \in V \setminus V_0$, we 
have $|r^{-1}(v)| < \infty$.  
\end{definition} 

A (finite or infinite) \textit{path} in the diagram is a (finite or infinite) sequence of edges $(e_i: e_i\in E_i)$ such that $s(e_i)=r(e_{i-1})$. Denote by $X_B$ the set of all infinite paths that start at $V_0$. The set 
$$
    [\ol e] := \{x = (x_i) \in X_B : x_0 = e_0, ..., x_n = e_n\}, 
$$ 
is called the \textit{cylinder set} associated with a finite path $\ol e = (e_0, ... , e_n)$. Cylinder sets generate a topology on $X_B$ such that
$X_B$ becomes a 0-dimensional Polish space. Recall that, for a standard Bratteli diagram, $X_B$ is compact, while the path space of a generalized Bratteli diagram is just a Polish space (it can be locally compact for 
some special classes of generalized Bratteli diagrams).

For $n = 0,1,\ldots$, let the element $f^{(n)}_{v,w}$ of the $n$-th incidence matrix $F_n = (f^{(n)}_{v,w})$ be the number of all edges between $v \in V_{n+1}$ and $w \in V_n$. If $F_n = F$ for all $n$, we call the corresponding generalized Bratteli diagram stationary. If all incidence matrices $F_n$ are $\mathbb{N}\times \mathbb{N}$ matrices, we call the corresponding generalized Bratteli diagram $B$ one-sided. If all $F_n$ are $\mathbb{Z}\times \mathbb{Z}$ matrices, we call $B$ two-sided.

\subsection{Vershik map and tail equivalence relation}

To define dynamics on Bratteli diagrams, we need the notion of an ordered Bratteli diagram.
An \textit{ordered} (standard or generalized) Bratteli diagram $B = (B, V, >)$ is a (standard or generalized) Bratteli diagram $B=(V,E)$ together with a partial order $>$ on $E$ such that edges $e,e'$ are comparable if and only if $r(e)=r(e')$ (see \cite{HPS1992, BezuglyiKarpel2016, BezuglyiJorgensenKarpelSanadhya2023} for more 
details). A (finite or infinite) path $e= (e_0,e_1,..., 
e_i,...)$ is called \textit{maximal} (respectively 
\textit{minimal}) if every $e_i$ is 
maximal (respectively minimal) among all elements of $r^{-1}(r(e_i))$. We denote the sets of all 
maximal infinite and all minimal infinite paths by $X_{\max}$ and 
$X_{\min}$ respectively. It is not hard to see that these sets are closed.
For brevity, we call the elements of $X_{\max}$ and 
$X_{\min}$ extreme paths. Note that, for a standard  Bratteli diagram, the sets $X_{\max}$ and 
$X_{\min}$ are necessarily nonempty, while for generalized Bratteli diagrams, there are orders for which $X_{\min} = X_{\max} = \emptyset$ (see Section 4 in \cite{BezuglyiJorgensenKarpelSanadhya2023} or \cite{BezuglyiDooleyKwiatkowski2006}). 

To introduce the Vershik map, first define a Borel transformation   $\varphi_B : X_B \setminus X_{\max} \rightarrow X_B \setminus X_{\min}$ as 
follows: given $x = (x_0, x_1,...)\in 
X_B\setminus X_{\max}$, let $k$ be the smallest number such that $x_k$ is not maximal. Let $y_k$ be the successor of $x_k$ in the finite set $r^{-1}(r(x_k))$. Define $\varphi_B(x):= (y_0, y_1,...,y_{k-1}, y_k,x_{k+1},...)$
where $(y_0, ..., y_{k-1})$ is the unique minimal path from 
$s(y_{k})$ to $V_0$.  Note that such defined $\varphi_B$  is, in fact, a homeomorphism. To extend the definition of $\varphi_B$ to $X_B$, we need to determine a bijection from $X_{\max} \rightarrow  X_{\min}$. It can be always done in a Borel manner if $X_{\max}$ and $X_{\min}$ have the same cardinality. Then $(X_B, \varphi_B)$ is called a generalized Bratteli-Vershik dynamical system associated with an ordered Bratteli diagram $(B, >)$.
In some cases (for example when $X_{\max}$ and $X_{\min}$ are empty sets), this extension can be made continuous, and $\varphi_B$ will be a 
homeomorphism of $X_B$. These questions have been discussed in 
\cite{BezuglyiJorgensenKarpelSanadhya2023, BezuglyiKarpelKwiatkowski2024, BezuglyiDooleyKwiatkowski2006}).

Two paths $x= (x_i)$ and $y=(y_i)$ in $X_B$ are called 
\textit{tail equivalent} if there exists an $n \in \mathbb{Z}_+$ 
such that $x_i = y_i$ for all $i \geq n$. This notion defines a 
countable Borel equivalence relation $\mathcal R$ on the
path space $X_B$ which is called the \textit{tail equivalence 
relation}.

Throughout the paper, by the term \textit{measure} we will mean a
non-atomic positive Borel measure on a Polish space. We will use the 
fact that any such measure is completely determined by its values on cylinder sets. 

\begin{definition}\label{def: tail inv meas}  A measure $\mu$ on 
$X_B$ is called \textit{tail invariant} if, for any cylinder sets
$[\ol e]$ and $[\ol e']$ such that $r(\ol e) = r(\ol e')$, we have
$\mu([\ol e]) = \mu([\ol e'])$.
\end{definition}


\subsection{Measure extension from subdiagrams}

In this subsection, we briefly describe the procedure of measure extension from a vertex subdiagram of a Bratteli diagram. This procedure works in the same way for both standard and generalized Bratteli diagrams. For more details see \cite{BezuglyiKarpelKwiatkowski2015, AdamskaBezuglyiKarpelKwiatkowski2017, BezuglyiKarpelKwiatkowski2024}.

Let $B$ be a standard or generalized Bratteli diagram.
A vertex subdiagram $\ov B$ of $B$ is a (standard or generalized) Bratteli diagram such that the set of vertices $W$ is formed by nonempty proper subsets $W_n \subset V_n$ and the set of edges consists of all edges of $B$ whose source and range are in $W_{n}$ and $W_{n+1}$, respectively. In other words, the incidence matrix $\ol F_n$ of $\ol B$ has the size $|W_{n+1}| \times |W_n|$ (it may be $\infty \times \infty$), and it is represented by a block of $F_n$ corresponding to the vertices from $W_{n+1}$ and $W_{n}$.
Note that for a generalized Bratteli diagram $B$, we consider both standard and generalized Bratteli subdiagrams $\ol B$, i.e., $|W_n|$
can be finite or infinite.

Let $B$ be a standard or generalized Bratteli diagram and $\ov B$ be its vertex subdiagram. Let $\ov \mu$ be an ergodic tail invariant probability measure on $X_{\ol B}$. Then $\ol \mu$ can be canonically extended to the ergodic measure $\widehat{\ov \mu}$ on the space $X_{B}$ by tail invariance: let $p_w^{(n)}$ be a measure $\ov \mu([\ov e])$ of any cylinder set $[\ov e] \subset X_{\ov B}$ such that $r(\ov e) = w \in W_n$. Then for any cylinder set $[\ov e'] \subset X_{B}$ such that $r(\ov e') = w \in W_n$ we set $\wh {\ov \mu}([\ov e']) := {\ov \mu}([\ov e]) = \ov p_w^{(n)}$. Denote by $\widehat X_{\ol B}:= \mathcal R(X_{\ol B})$ the subset of all paths in $X_B$ that are tail equivalent to paths from $X_{\ol B}$. Then $\widehat X_{\ol B}$ is the smallest $\mathcal R$-invariant subset of $X_B$ containing $X_{\ol B}$. After setting $\wh {\ov \mu}(X_B \setminus \wh X_{\ov B}) = 0$, we obtain an ergodic tail invariant measure $\wh {\ov \mu}$ on the whole path space $X_B$.

One can compute the measure $\wh {\ov \mu}(X_B)$ as follows. Let $\wh X_{\ol B}^{(n)}$ be the set of all paths $x = (x_i)_{i = 0}^{\infty}$ from $X_B$ such that the finite path  $(x_0, \ldots, x_{n-1})$ ends at a vertex $w \in W_n$ of $\ov B$, and the tail  $(x_{n},x_{n+1},\ldots)$ belongs to $\overline{B}$,
i.e., 
\begin{equation}\label{n-th level}
\wh X_{\ol B}^{(n)} = \{x = (x_i)\in \wh X_{\ol B} : r(x_{i-1}) \in W_{i}, \ \forall i \geq n\}.
\end{equation}
It is clear that $\wh X_{\ol B}^{(n)} \subset \wh X_{\ol B}^{(n+1)}$ and $\wh X_{\ol B} = \bigcup_n \wh X_{\ol B}^{(n)}$. Moreover, we have
\begin{equation}\label{extension_method}
\widehat{\ov \mu}(\wh X_{\ol B}) = \lim_{n\to\infty} \widehat{\ov \mu}(\wh X_{\ol B}^{(n)}) = 
\lim_{n\to\infty}\sum_{w\in W_n}  H^{(n)}_w \ov p^{(n)}_w =  1 + \sum_{n = 0}^{\infty}\sum_{v \in W_{n + 1}}
\sum_{w\in {W}'_{n}} {f}_{vw}^{(n)}  H_{w}^{(n)} 
\ol p_v^{(n+1)},
\end{equation}
where $W_{n}^{'} = V_{n} \setminus W_{n},\ n = 0, 1, 2, \ldots$ and $H_w^{(n)}$ is a number of paths between the vertex $w \in V_n$ and vertices of $V_0$ (see Theorem 3.1 in \cite{BezuglyiKarpelKwiatkowski2024}). The value 
$\wh {\ov \mu}(X_B)$ can be either finite or infinite. If it is finite, then we say that $\ol\mu$ admits a finite measure extension.

\section{Disjoint subdiagrams for ergodic invariant measures}\label{Sect:pos_meas_sbd_standard_Pascal} 

In \cite[Subection 5.2]{BezuglyiKarpelKwiatkowski2019}, a class of (standard) Bratteli diagrams was considered such that every ergodic invariant probability measure can be obtained as the measure extension from a uniquely ergodic vertex subdiagram. It was shown in \cite[Subsection 6.2]{BezuglyiKarpelKwiatkowski2019} that the Pascal-Bratteli diagram does not belong to this class. In this section, we prove that, for every ergodic invariant probability measure $\nu_p$ on the Pascal-Bratteli diagram, one can find a subdiagram $B_p$ such that the measure $\nu_p(X_p) $ is arbitrarily
close to 1, and the sets $X_p$ are pairwise disjoint, where $X_p$ is the path space of $B_p$, $0<p<1$. 

\begin{figure}[ht]
\centerline{\includegraphics[width=0.5\textwidth]{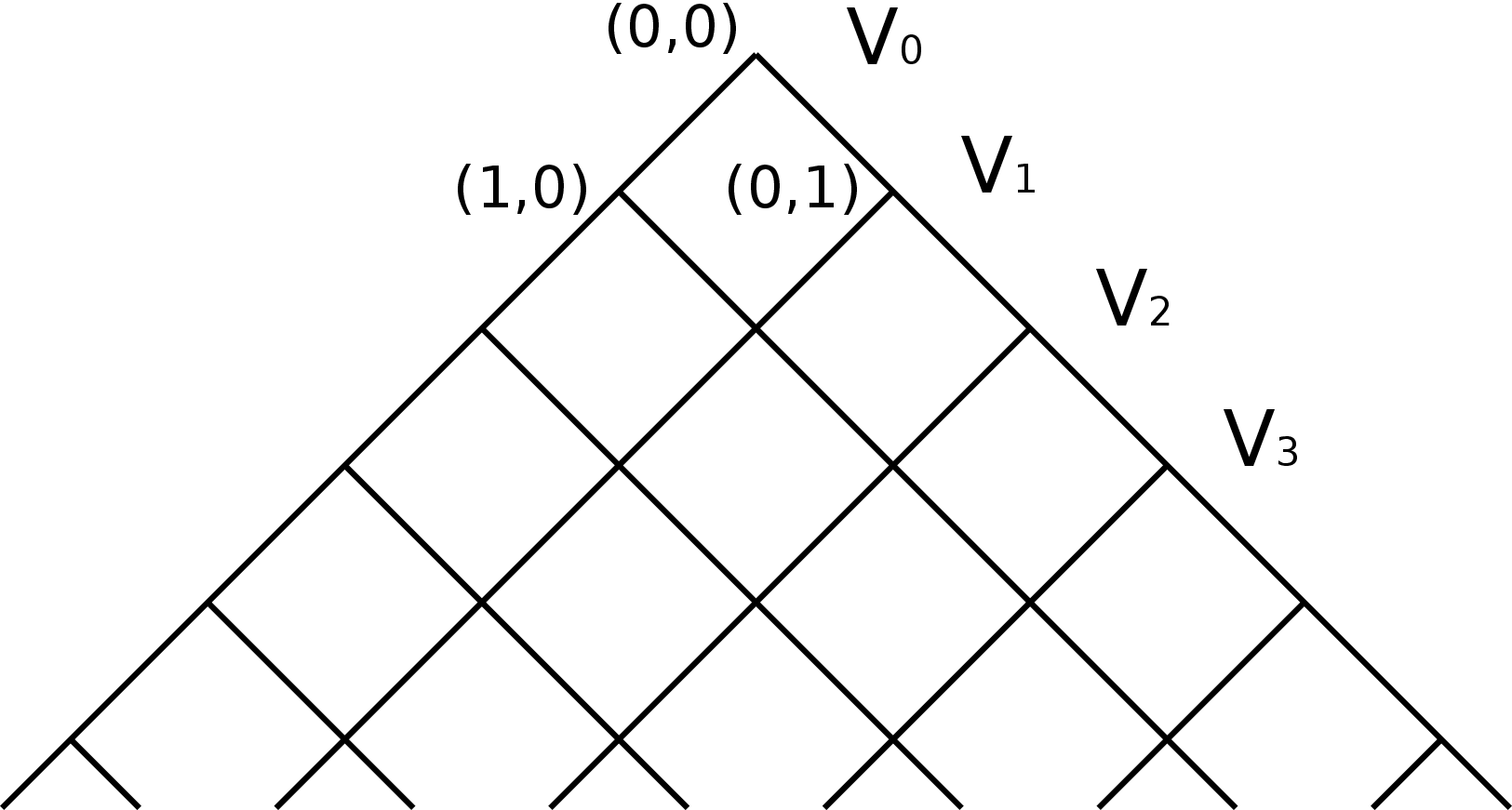}}
\caption{\label{fig:pascal-diagram}Pascal diagram}
\end{figure}

The vertices of the \textit{Pascal diagram} can be labeled by pairs of non-negative integers $(i,j)$. Each vertex $(i,j)$ is connected by an edge to two vertices $(i+1,j)$ and $(i,j+1)$. Typically, this diagram is drawn expanding downwards with the vertex $O=(0,0)$ at the top (see Figure \ref{fig:pascal-diagram}). The vertices are subdivided into levels $V_n=\{(i,j)\in \mathbb Z_+\times\mathbb Z_+:i+j=n\}$. Observe that each vertex $v\in V_n$ is connected by an edge with one or two vertices from $V_{n-1}$ for $n\geqslant 1$. Geometrically, it will be useful for us to view the Pascal diagram (triangle) as a subset of $\mathbb R_+\times\mathbb R_+\subset \mathbb R\times\mathbb R$. By a path in the Pascal diagram, we will mean a (finite or infinite) sequence of edges 
$\{e_0,e_{1},e_{2},\ldots\}$ of the diagram such that $e_j$ connects a vertex of $V_j$ with a vertex of $V_{j+1}$ and is connected to $e_{j+1}$ for each $j$.

Ergodic tail-invariant measures on the Pascal diagram are labeled by a real number (probability) $0 < p < 1$ such that each edge of the form $(v,v+(1,0))$ is given weight $p$ and each edge of the form $(v,v+(0,1))$ is given weight $1-p$ (see e.g. \cite{MelaPetersen2005}). Denote the corresponding measure on the path space of the Pascal diagram by $\nu_p$. We can formally include the values
$p = 0$ and $p = 1$ in the consideration, but this case gives atomic measures $\nu_p$ and is not interesting.


\begin{thm}\label{thm: measure supports}
For each $0 < p < 1$ and each $\varepsilon > 0$ there exists a subdiagram $B_p$  of the Pascal diagram such that 
\begin{enumerate} \item[$1)$] 
$\nu_p(X_p) > 1 - \varepsilon$ for each $p$, where $X_p$ is the path space of $B_p$; 
\item[$2)$] the subspaces $X_p,0\leqslant p\leqslant 1$, are pairwise disjoint.
\end{enumerate}
\end{thm}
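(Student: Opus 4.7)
The strategy is to exploit that $\nu_p$ is the distribution of a sequence of independent $\mathrm{Bernoulli}(p)$ trials: if $x_n$ denotes the first coordinate of the vertex visited at level $n$, then under $\nu_p$ we have $x_n\sim\mathrm{Binom}(n,p)$. The plan is to confine $\nu_p$ to a narrow "tube" around the slope-$p$ ray from the origin, and then to use the $p$-dependence of these tubes to secure disjointness automatically.

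First, Hoeffding's inequality furnishes a uniform concentration estimate
\[
\nu_p\!\left(\left\{x:\left|\tfrac{x_n}{n}-p\right|\geq t\right\}\right)\leq 2e^{-2nt^2},\qquad p\in(0,1),\ t>0.
\]
Fix once and for all a sequence $a_n\downarrow 0$ with $\sum_n e^{-2na_n^2}<\infty$, for instance $a_n=n^{-1/3}$. By the Borel--Cantelli lemma, for every $p\in(0,1)$ the set
\[
E_p^{(N)}=\bigcap_{n\geq N}\bigl\{x:|x_n/n-p|<a_n\bigr\}
\]
satisfies $\nu_p(E_p^{(N)})\uparrow 1$ as $N\to\infty$. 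Given $\varepsilon>0$, pick $N_p=N_p(\varepsilon)$ with $\nu_p(E_p^{(N_p)})>1-\varepsilon$, and define $B_p$ to be the vertex subdiagram with
\[
W_n^{(p)}=\bigl\{(i,n-i)\in V_n:|i/n-p|<a_n\bigr\},\qquad n\geq N_p,
\]
and $W_n^{(p)}$ any nonempty proper subset of $V_n$ containing the initial segments of $E_p^{(N_p)}$ for $1\leq n<N_p$ (e.g.\ $W_n^{(p)}=V_n\setminus\{(n,0)\}$); the edges of $B_p$ are all edges of $B$ between consecutive layers of $W^{(p)}$. Every path in $E_p^{(N_p)}$ lies in the path space $X_p$ of $B_p$, giving $\nu_p(X_p)>1-\varepsilon$ and hence assertion (1).

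Disjointness (2) then comes essentially for free. For $p\neq p'$ in $(0,1)$, since $a_n\to 0$ there exists $M$ with $2a_n<|p-p'|$ for all $n\geq M$; any common $x\in X_p\cap X_{p'}$ would satisfy both $|x_n/n-p|<a_n$ and $|x_n/n-p'|<a_n$ for every $n\geq\max(N_p,N_{p'},M)$, forcing $|p-p'|<2a_n$, a contradiction. The atomic endpoints $p\in\{0,1\}$ are accommodated by letting $B_0$ and $B_1$ be the unique all-down and all-right extreme rays, which carry full measure under $\nu_0$ and $\nu_1$ and are visibly disjoint from every $X_{p'}$ with $p'\in(0,1)$.

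The one delicate calibration is the choice of $a_n$: it must decay slowly enough for Hoeffding plus Borel--Cantelli to trap $\nu_p$-full measure in the tube, yet fast enough that tubes for distinct slopes are disjoint at all large levels. Any sequence satisfying $a_n\to 0$ together with $\sum_n e^{-2na_n^2}<\infty$ manages both constraints simultaneously, which is why the simultaneous requirement over the uncountable family $\{p\in(0,1)\}$ does not cause trouble. The minor bookkeeping required by the proper-subset condition in the definition of a vertex subdiagram is absorbed by discarding one irrelevant vertex from each layer.
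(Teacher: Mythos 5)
Your proof is correct and follows essentially the same route as the paper's: both confine $\nu_p$ to a shrinking tube around the ray of slope determined by $p$ (you via Hoeffding plus Borel--Cantelli with a universal width sequence $a_n\to 0$, the paper via the strong law of large numbers with $p$-dependent widths $2^{1-i}$ and thresholds $N_i(p)$), and both deduce pairwise disjointness from the fact that tubes of vanishing width around distinct slopes eventually separate. The only blemish is the parenthetical example $W_n^{(p)}=V_n\setminus\{(n,0)\}$ for $n<N_p$, which can exclude genuine initial segments of paths in $E_p^{(N_p)}$ (at $n=1$, for instance, both vertices of $V_1$ occur as initial segments); your stated requirement that $W_n^{(p)}$ contain all such initial segments is what actually matters, and with that the argument goes through.
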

\begin{proof} 

Fix $0<p<1$. We will construct by induction a sequence $N_i=N_i(p)$ such that for the sets 
$$A_i=A_i(p)=\left\{\{(x_n,y_n)\}\in X_B:\left|\frac{x_k}{k} -p\right|<2^{1-i}\;\;\text{for all}\;\;k > N_i\right\}$$ 
one has  $\nu_p(\bigcap_{i\leqslant j}A_i)>1 - \varepsilon$ for every $j\in\mathbb N$. Let $N_0=0$.
\vskip 0.1cm
\noindent {\bf Base of induction.} Since $0\leqslant x_n\leqslant n$ for all $n$ we set $A_0=X_B$. One has $\nu_p(A_0)=1.$
\vskip 0.1cm
\noindent {\bf Step of induction.} Let $j\in\mathbb N$. Assume that $N_i$ are constructed for $0\leqslant i\leqslant j$ such that 
$$
\nu_p\left(\bigcap_{i\leqslant j}A_i\right) > 1 - \varepsilon.
$$ 
By the Law of Large Numbers, for $\nu_p$ almost all $\{(x_n,y_n)\}\in X_B$ one has 
$$
\lim_{n \rightarrow \infty} \frac{x_n}{n} = p.
$$ 
It follows that for $N_{j+1}$ large enough one can make $\nu_p(A_{j+1})$ be arbitrary close to $1$. This implies that for $N_{j+1}$ large enough we have
$$\nu_p\left(\bigcap_{i\leqslant j+1}A_i\right)
> 1 - \varepsilon,
$$
and that proves the induction step.

Now, let $A(p)=\bigcap_{i\in\mathbb Z_+}A_i(p)$. Introduce a subdiagram $B_p=(V_p,E_p)$ of the Pascal diagram as follows. Let $V_p$ be the set of all vertices $v\in \mathbb Z_+\times\mathbb Z_+$ such that there exists a path in $A(p)$ containing $v$. Let $E_p$ be the set of all edges of the Pascal diagram having both ends in $V_p$. 

Condition $1)$ of Theorem \ref{thm: measure supports} is satisfied by construction. Let $0 < p\neq q < 1$. Then there exists $i$ such that $|p-q|>2^{2-i}$. By definition, $A_i(p)$ and $A_i(q)$ are disjoint. This implies that condition $2)$ is satisfied as well.
\end{proof}

\begin{remark}
    The subdiagrams $B_p$ are not pairwise disjoint as graphs, but their path spaces $X_p$ are disjoint.
\end{remark}

\section{Orders on the Pascal-Bratteli diagram}
\label{Sect:X_max_X_min}


The following theorem shows that there is an order on the Pascal-Bratteli diagram for which the sets of minimal infinite and maximal infinite paths are of the cardinality continuum. This result can also be found in \cite[Example 7.2]{Fricketal2017},  where the authors provide a sketch of its proof. Here we provide a detailed proof, which has a similar idea to the proof in \cite{Fricketal2017}, but the construction has principal differences with the construction from \cite{Fricketal2017}. The main idea of our proof is constructing minimal paths along every direction. This is motivated in part by Theorem \ref{thm: measure supports}, where every subdiagram is constructed along a particular direction of the Pascal-Bratteli diagram. We hope that this second geometrical approach will contribute to the further development of the theory.

\begin{thm}\label{thm: continuum Xmin} There exists an ordering of the edges of the Pascal diagram into $0,1$ such that both the set of minimal paths $X_{\min}$ and the set of maximal paths $X_{\max}$ have the cardinality of the continuum.
\end{thm}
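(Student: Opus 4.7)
The plan is to view an ordering of the edges of the Pascal diagram as an assignment $\sigma$ that prescribes, for each interior vertex $v=(a,b)$ (with $a,b\ge 1$), whether the edge from the ``left parent'' $(a-1,b)$ or the ``right parent'' $(a,b-1)$ receives label $0$; the other receives label $1$. Boundary vertices $(a,0)$ and $(0,b)$ admit no choice. Such a $\sigma$ induces two spanning trees of the Pascal DAG rooted at $(0,0)$: the \emph{min tree}, in which each non-root vertex is joined to its minimal parent, and the \emph{max tree}. Infinite minimal paths $X_{\min}$ are in natural bijection with the infinite branches of the min tree, and $X_{\max}$ with those of the max tree, so it suffices to construct $\sigma$ such that both trees contain a perfect binary subtree.

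The approach is to embed two perfect binary subtrees in the Pascal diagram, living in disjoint regions. Fix levels $n_k=2^{k+1}$ for $k\ge 1$ and set $n_0=0$. In the left half $\{a<b\}$ place $T^{\min}_k = \{(i,n_k-i) : 0\le i<2^k\}$; in the right half $\{a>b\}$ place $T^{\max}_k = \{(n_k-j,j) : 0\le j<2^k\}$. Declare the edges of the min binary subtree by joining the position-$i$ node of $T^{\min}_k$ to positions $2i$ and $2i+1$ in $T^{\min}_{k+1}$ via the ``down-first'' path in the Pascal DAG (go down until the second coordinate matches that of the target, then go right). Along every downward step of such a path set $\sigma=P_2$, so that the source becomes the minimal parent of the target; along every rightward step set $\sigma=P_1$. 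Then every edge of the resulting binary tree is a minimal edge, so the tree embeds into the min tree. Symmetrically, join each position-$j$ node of $T^{\max}_k$ to positions $2j$ and $2j+1$ in $T^{\max}_{k+1}$ by a ``right-first'' path, setting $\sigma=P_2$ on right-edges and $\sigma=P_1$ on down-edges so that this second binary tree embeds into the max tree. Any Pascal vertex not touched by either construction (in particular the diagonal $\{a=b\}$) receives an arbitrary $\sigma$-value.

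The main task is the bookkeeping needed to verify that $\sigma$ is consistently defined. A direct calculation shows that the min-tree paths stay strictly inside $\{a<b\}$ (using $i<2^k=n_k/2$) and the max-tree paths strictly inside $\{a>b\}$, so the two constructions never compete for the same vertex. Within the left half, the two paths emanating from a single tree parent share a common down-first prefix whose vertices are all entered by downward edges, so both paths force the identical assignment $\sigma=P_2$ on that prefix before they diverge. Paths from distinct tree parents at positions $i\ne i'$ occupy disjoint first-coordinate columns $\{a=i\}$ and $\{a=i'\}$ during their down phases, while their right phases lie on horizontal lines whose second coordinate is $n_{k+1}-2i$ or $n_{k+1}-2i-1$ and thus depends only on the starting position; consequently no two such paths share an intermediate vertex and the $\sigma$-prescriptions never clash. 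Once this consistency is established, the embedded min binary tree carries a continuum of infinite branches, each yielding a distinct infinite minimal path; by the symmetric argument the max binary tree contributes a continuum of infinite maximal paths. Hence $|X_{\min}|$ and $|X_{\max}|$ both equal the cardinality of the continuum.
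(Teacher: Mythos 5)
Your proof is correct, but it follows a genuinely different construction from the paper's. You first reduce the problem cleanly: the minimal (resp.\ maximal) edges form a spanning tree of the Pascal graph rooted at $(0,0)$, and infinite minimal (resp.\ maximal) paths are exactly the infinite branches of that tree, so it suffices to embed a subdivided perfect binary tree into each. You then do this explicitly and combinatorially: branch nodes $T^{\min}_k$ at levels $n_k=2^{k+1}$ with $2^k$ vertices, L-shaped (down-then-right) connecting paths, and coordinate bookkeeping to check that the prescribed choices of minimal parent never clash; the min and max structures are kept apart by confining them to the half-planes $a<b$ and $a>b$. I checked the disjointness claims (shared down-prefixes of sibling paths, distinct columns for the down phases and distinct rows for the right phases of paths from distinct parents, and the fact that consecutive strips meet only at branch nodes, where the incoming and outgoing constraints live at different vertices), and they hold. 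The paper instead builds, for every dyadic $r$, a path $C_r$ that asymptotically follows the ray $L_r$ of slope $r\pi/2$, proves geometric separation estimates of the form $d(C_t\cap H_M, L_{r+1/2^{n+1}}\cap H_M)\geqslant 5$ to keep the tree planar and embedded, obtains the min and max copies by shifting and reflecting across $y=x$, and finally produces a branch for every real $r$ via its dyadic expansion. What your approach buys is elementarity: no metric estimates, and the consistency check is a finite list of coordinate comparisons. What the paper's approach buys is the extra geometric information that there is a minimal path asymptotic to every direction in the first quadrant, which ties in with their Theorem~\ref{thm: measure supports} on measure supports along directions; your binary tree, by contrast, is squeezed into the region $a<b$ and says nothing about directions. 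One cosmetic remark: you use the symbols $P_1,P_2$ for the two parents without defining them, and you should state explicitly that label $0$ is the smaller of the two labels; neither affects the correctness of the argument.
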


\begin{proof}
Let $\mathbb D$ be the subset of dyadic numbers of $[0,1)$, \ie \;
$\mathbb D=\{\frac{p}{2^n}:0\leqslant p\leqslant 2^n-1,\;p,n\in\mathbb Z_+\}$. We will construct inductively a countable collection of infinite paths $C_r$, $r\in \mathbb D$, which start at vertices $v_r$. 
We set $C_0$ to be the ``left side of the Pascal triangle'', i.e., the sequence of edges $((j,0),(j+1,0)),j\in\mathbb Z_+$. For $r\in[0,1]$ we denote by $L_r$ the ray inside $\mathbb R_+\times\mathbb R_+$ starting at $(0,0)$ and making the angle $r\frac{\pi}{2}$ with the real positive 
half-line. Thus, $L_0$ and $L_1$ are the sides of the Pascal triangle, and, as a set of points, $C_0$ coincides with $L_0$.

On the $n$th step, $n\in\mathbb Z_+$, given that $C_r$ are constructed for all $r$ of the form $\frac{p}{2^n}$, where $0\leqslant p\leqslant 2^n-1$, we construct the paths $C_r$ for all $r\in\mathbb D$ of the form $\frac{p}{2^{n+1}}$, where $p$ is odd. Moreover, we will do it such that the following inductive conditions hold for each of the constructed paths $C_r$: 
\begin{itemize}
    \item[$1)$] any two paths $C_{r_1}, C_{r_2}$, $r_1\neq r_2$, intersect at most at one point, which coincides with the beginning of one of them;
    \item[$2)$] for any $n,p\in\mathbb Z_+,0\leqslant p\leqslant 2^n-1$ and $r=p/2^n$ the path $C_{r+1/2^{n+1}}$ starts at a vertex $v_{r+1/2^{n+1}}$ of the path $C_r$;
    \item[$3)$] there exists $N=N(r)$ such that for all $(x,y)\in C_r$ with $x+y\geqslant N$ one has \\ $d((x,y),L_r)<2$.
    \end{itemize}

\begin{figure}[ht]
\centerline{\includegraphics[width=0.7\textwidth]{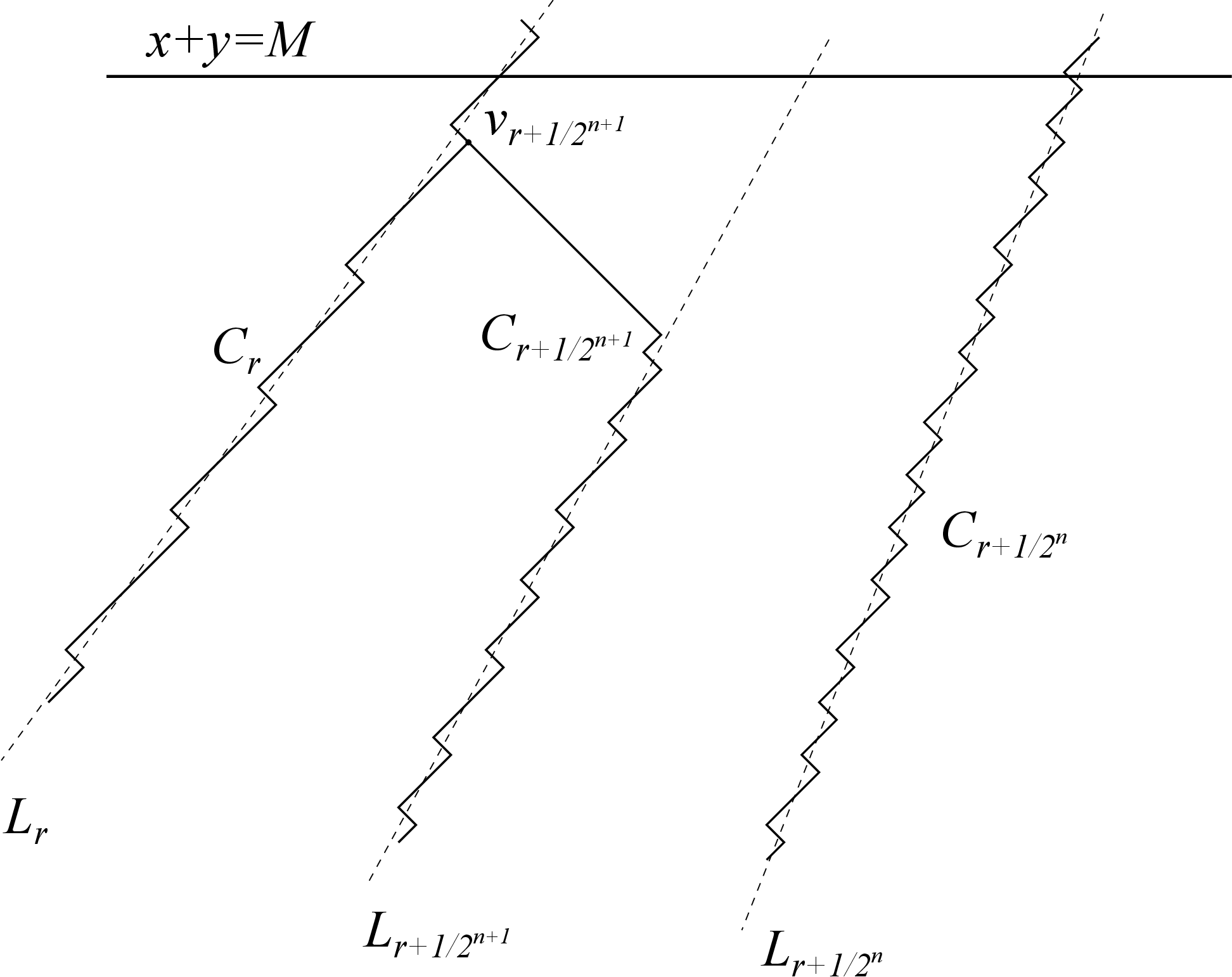}}
\caption{\label{fig:continuum-paths}Illustration to the proof of Theorem~\ref{thm: continuum Xmin}}
\end{figure}

For $n\in\mathbb N$, set $\mathbb D_n=\{\frac{p}{2^n}:p\in\mathbb Z_+,0\leqslant p\leqslant 2^n-1\}$. Assume that $C_r$ are constructed for all $r\in\mathbb D_n$ such that the inductive conditions $1) - 3)$ hold. Set
$$
M=\max\{\max\{N(r):r\in\mathbb D_n\},2^{n+5}\}.$$  Let $H_M$ be the half-plane $\{(x,y):x+y\geqslant M\}$. 

Now, fix $r\in\mathbb D_n$. We will show how to construct the path $C_{r+1/2^{n+1}}$. The inductive condition implies that \begin{equation}\label{eq: dCr} d(C_t\cap H_M,L_{r+1/2^{n+1}}\cap H_M)\geqslant 5,\;\;\text{for all}\;\;t\in\mathbb D_n.\end{equation} Indeed, by $3)$, $C_t\cap H_M$ lies in the neighborhood of radius $2$ of $L_t$. Since $M\geqslant 2^{n+5}$ for any two points $A\in L_t\cap H_M,B\in L_{r+1/2^{n+1}}\cap H_M$ one has 
$$AB\geqslant OA\cdot \sin(1/2^{n+1})>2^{n+4}/2^{n+1}=8.
$$ 
Thus, $d(L_t\cap H_M,L_{r+1/2^{n+1}})>8$. From the above equation \eqref{eq: dCr} follows.

 Let $(a,b)$ be a vertex of the part of the path $C_r$ belonging to $H_M$ minimizing the distance from $C_r\cap H_M$ to $L_{r+1/2^{n+1}}\cap H_M$. Starting with $w_0=(a,b)$ we construct a sequence $\{w_n\}$ of vertices of consecutive levels (i.e. $w_n\in V_{a+b+n}$) such that $w_{n+1}$ is the closest to $L_{r+1/2^{n+1}}$ vertex among two downward neighbors of $w_n$ (belonging to $V_{a+b+n+1}$). We claim that the infinite path $C_{r+1/{2^{n+1}}}$ starting with $w_0$ and passing through the sequence of vertices $\{w_n\}$ satisfies the inductive assumptions.

 Indeed, by the inductive assumptions $1)$ and $2)$ the graph $\Gamma_n$ consisting of the union of the paths $C_t$, $t\in\mathbb D_n$, is a connected planar graph forming an infinite tree. By \eqref{eq: dCr}, $C_t$  does not intersect $L_{r+1/2^{n+1}}\cap H_M$ for all $t\in\mathbb D_n$. Observe that $C_t$ intersects $\partial H_M$ exactly at one point $A_t$ for each $t\in \mathbb D_n$.  Using condition $3)$ and the choice of $M$ we obtain that $C_r\cap H_M$, the segment $A_rA_{r+1/2^n}\subset\partial H_M$, and $C_{r+1/2^n}\cap H_M$ bound a connected infinite domain $U_r$ which contains $H_M\cap L_{r+1/2^{n+1}}$. 

 Consider the ray $w_0+\mathbb R_+=\{w_0+(0,s):s\in\mathbb R_+\}$. Simple geometric considerations show that $w_0+\mathbb R_+$ intersects $L_{r+1/2^{n+1}}$ at some point $P$. By the choice of $w_0$, the ray $w_0+\mathbb R_+$ intersects $C_r$ only at the point $w_0$. 
 Let $l=[|w_0P|]$ (the integer part of the length of the segment joining $w_0$ and $P$). By construction, the points $w_1,w_2,\ldots,w_l$ are consecutive integer points on $w_0+\mathbb R_+$. One has $d(w_l,C_{r+1/2^{n+1}})<1$. Recall that for every $n\geqslant l$ the point $w_{n+1}$ is the closest to $L_{r+1/2^{n+1}}$ among the two neighbors of $w_n$ in $V_{a+b+n+1}$. Using induction we obtain that $d(w_n,C_{r+1/2^{n+1}})<1$ for every $n\geqslant l$. Using \eqref{eq: dCr} we derive that the path $C_{r+1/2^{n+1}}$ lies inside the domain $U_r$, except for the point $w_0$ belonging to $C_r$. From the above considerations, the conditions $1)-3)$ are satisfied for the path $C_{r+1/2^{n+1}}$.

 Furthermore, the union 
 $$
 \Gamma=\bigcup\limits_{n\in\mathbb N} \Gamma_n=\bigcup\limits_{r\in\mathbb D} C_r
 $$ 
 as a graph is a planar infinite tree such that there are no two vertices $v\neq w$ of the same level $V_n$ which are connected by an edge of $\Gamma$ to the same vertex $u\in V_{n+1}$. Note that $\Gamma$ is a subgraph of the Pascal diagram. Set $\Gamma'=\bigcup\limits_{r\in\mathbb D/4} C_r$ and let $\Gamma''$ be the graph symmetric to $\Gamma'$ with respect to $y=x$. Since $C_{1/4}$ asymptotically lies in $2$-neighborhood of the line $y=\sin(\pi/8)x$, there exists $N$ such that $(N,0)+C_{1/4}$ does not intersect the line $y=x$. Then the shifted graphs $(N,0)+\Gamma'$ and $(0,N)+\Gamma''$ do not intersect. Assign $0$ to each edge of $(N,0)+\Gamma'$ and $1$ to each edge of $(0,N)+\Gamma''$ (except the edges on the $y$-axis, which are labeled $0$ automatically). The rest of the edges of the Pascal diagram are numbered by $ 0$'s and $ 1$'s consistently in an arbitrary way. 

 Finally, in addition to already constructed paths $C_r,r\in\mathbb D$, for each $r\in [0,1/4)\setminus \mathbb D/4$ we construct a path $C_r$ in $\Gamma$ (thus, giving a minimal path and a symmetric maximal path in the ordered Pascal diagram) as follows. Write the dyadic expansion of $r$ and the corresponding partial sums: $$r=\sum\limits_{i=1}^\infty 2^{-n_i},\;\;r_k=\sum\limits_{i=1}^k 2^{-n_i},\;\;\text{where}\;\;n_i\in\mathbb N,\;\;n_{i+1}>n_i\;\;\text{for each}\;\;i.$$ Set $r_0=n_0=0$. For $k\in\mathbb Z_+$ denote by $T_k=T_k(r)$ the part of the path $C_{r_{n_k}}$ joining the vertices $v_{r_{n_k}}$ and $v_{r_{n_{k+1}}}$. Set $C_r=\bigcup_{k\in\mathbb Z_+}T_k$. 
 
 By construction, $((0,0),(N,0))\cup ((N,0)+C_r),r\in [0,1/4)$ are pairwise distinct minimal paths of the ordered Pascal diagram. 
 The maximal paths in the diagram are taken symmetrically to the minimal ones with respect to $y=x$. This finishes the proof.
\end{proof}

The following result answers Question 7.3 from \cite{Fricketal2017}.
\begin{thm}\label{thm: continuum Xmin countable Xmax} There exists an ordering of the edges of the Pascal diagram into $0,1$ such that the set of minimal paths $X_{\min}$ has the cardinality of continuum and the set of maximal paths $X_{\max}$ is countably infinite.
\end{thm}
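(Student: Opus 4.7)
The plan is to adapt the proof of Theorem~\ref{thm: continuum Xmin}, keeping the minimal-side construction unchanged but deliberately breaking the symmetry so that the maximal side produces only countably infinitely many infinite paths. First I would reuse the inductive construction of Theorem~\ref{thm: continuum Xmin} to build the tree $\Gamma' = \bigcup_{r \in \mathbb{D}/4} C_r$, whose branches $C_r$ approximate the rays $L_r$ at angles $r\pi/2 \in [0,\pi/8)$. After shifting by $(N,0)$ for a sufficiently large $N$, I would label every edge of $(N,0)+\Gamma'$ as $0$ (minimal). Combined with the $x$-axis prefix $((0,0),(1,0),\ldots,(N,0))$, this yields continuum many pairwise distinct infinite minimal paths, one for each $r \in [0,1/4)$ (the nondyadic $C_r$ being obtained by concatenation of dyadic pieces exactly as in Theorem~\ref{thm: continuum Xmin}).

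For the maximal side, rather than the symmetric tree $(0,N)+\Gamma''$ used in Theorem~\ref{thm: continuum Xmin} (which produces continuum many maximal paths), I would design the max-structure to be a ``caterpillar'' rooted at $(0,0)$: the $y$-axis $\{(0,m):m\geq 0\}$ as the main trunk, with a horizontal ray $\{(k,m):k\geq 0\}$ branching off at each level $m\geq 0$. Concretely, for every interior vertex $(i,j)$ with $i,j\geq 1$ that does \emph{not} lie in $(N,0)+\Gamma'$, I would label the incoming edge from $(i-1,j)$ to $(i,j)$ as $1$ (maximal) and the one from $(i,j-1)$ to $(i,j)$ as $0$. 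At vertices of $(N,0)+\Gamma'$ the min-edge is prescribed by $\Gamma'$, and the max-edge is forced to be the unique other incoming edge. Under these conventions the $y$-axis is trivially an infinite max-path, and each horizontal ray from $(0,m)$ is a candidate infinite max-path whose infinitude must be checked.

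The main technical step is to verify that the resulting set of infinite max-paths is countably infinite. I would exploit the geometric control on $\Gamma'$: after the shift, $(N,0)+\Gamma'$ sits inside the cone from $(N,0)$ of angular aperture $[0,\pi/8)$, so the horizontal line $y=m$ enters this cone only for $x\geq N+m/\tan(\pi/8)$. Level-by-level, the forced max-edges inside $(N,0)+\Gamma'$ may disrupt a horizontal max-ray: at a $\Gamma'$-vertex whose incoming $\Gamma'$-step is of type $+x$ the horizontal alignment is broken (the ray terminates there), whereas at one whose step is $+y$ it is preserved. Using the tubular bound $d((x,y),L_r)<2$ for points of $C_r$, together with the sparsity and dyadic branching of $\Gamma'$, one shows that each horizontal ray either extends to an infinite max-path or terminates after finitely many vertices, and that infinitely many such rays do extend to infinity. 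The hardest part of the argument is this final item: ruling out the possibility that the forced max-edges inside $(N,0)+\Gamma'$ organize themselves into uncountably many infinite branches of the max-tree. If the straightforward verification based on the uniform rule $p_{\max}(i,j)=(i-1,j)$ turns out to admit such branches, I would modify the max-assignment at a sparse auxiliary set of vertices inside $(N,0)+\Gamma'$ so as to collapse any potential uncountable max-branching, while leaving the continuum of minimal paths untouched.
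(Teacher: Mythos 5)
Your minimal side is fine: reusing the tree $\Gamma'=\bigcup_{r\in\mathbb D/4}C_r$ from Theorem~\ref{thm: continuum Xmin}, shifting it, and labelling its edges $0$ does yield a continuum of minimal paths. The gap is on the maximal side, and it is exactly the step you yourself flag as ``the hardest part.'' Under your uniform rule, every \emph{horizontal} edge $((i-1,j+1),(i,j+1))$ of $(N,0)+\Gamma'$ is the minimal edge into $(i,j+1)$, so the maximal edge into $(i,j+1)$ is forced to be the vertical one from $(i,j)$; generically $(i+1,j)$ also has maximal incoming edge from $(i,j)$, so $(i,j)$ acquires \emph{two} maximal children and a branch point of the max-tree sits directly below every horizontal edge of $\Gamma'$. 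But each branch $C_r$ with $r>0$ has slope less than $\tan(\pi/8)$, hence consists mostly of horizontal edges, and the branches $C_{1/2^m}$ all start on the shifted $x$-axis and cross every horizontal line $y=m$ in a horizontal run whose position tends to infinity as $m\to\infty$. So every row of your caterpillar contains infinitely many forced branch points, each offering an upward continuation into the next row, which in turn carries infinitely many further branch points; whether the resulting end space is countable now depends on a delicate interleaving of these turn-up points with the points where horizontal runs are blocked, and nothing in the proposal controls this. Worse, your fallback repair cannot be a routine patch: the offending maximal edges are \emph{forced} by the minimal edges of $\Gamma'$, so they cannot be relabelled without destroying minimal branches, and any fix must instead systematically relabel edges one row above each branch point --- i.e.\ it is a new construction, not a perturbation.

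This is precisely where the paper's proof goes a different way. It does not reuse $\Gamma'$ at all: it builds a far sparser minimal tree $\Upsilon$ whose branchings occur only at levels $4^k$, inside explicit vertex sets $S_k=\{(2j,4^k-2j):1\leq j\leq 2^k\}$, connected by long straight segments; and, crucially, it adds an auxiliary family $\Psi$ of vertical edges labelled $0$, chosen so that each horizontal line $y=n$ meets the $0$-labelled edges of $\Upsilon$ in at most one segment and every edge $((i,2j),(i,2j+1))$ ends up labelled $0$. This forces any maximal path to admit at most two branch points, whence $X_{\max}$ is countable. An analogue of $\Psi$ --- a systematic layer of $0$-edges that extinguishes max-branching immediately after it is created --- is the ingredient missing from your argument, and supplying it amounts to redoing the construction along the paper's lines.
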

\begin{proof}
    We will construct in a certain way a tree of minimal paths. For each $k\in\mathbb N$ let $S_k\subset V_{4^k}$ be the set of vertices given by:
    $$S_k=\{(2j,4^k-2j):1\leqslant j\leqslant 2^k\}.$$ For each $k$, we connect the vertices from $S_k$ by a collection of paths $\Upsilon_k$ to the vertices from $S_{k+1}$ as follows. For each $1\leqslant j\leqslant 2^k$ connect $(2j,4^k-2j)\in S_k$ by a straight path to $(2j,4^{k+1}-4j)$, the latter connect by a straight path to $(4j-2,4^{k+1}-4j)$, and the latter connect by segments of length 2 to $(4j,4^{k+1}-4j)\in S_{k+1}$ and $(4j-2,4^{k+1}-4j+2)\in S_{k+1}$. Also, let $\Upsilon_0$ be the union of the segment $((0,0),(4,0))$ and the segment $((2,0),(2,2))$. Let $\Upsilon=\bigcup\limits_{k\in\mathbb Z_+}\Upsilon_k$. Notice that $\Upsilon$ is an infinite tree such that for any vertex $(i,j)\in\mathbb N\times\mathbb N$ it contains at most one of the edges $((i-1,j),(i,j))$ and $((i,j-1),(i,j))$. Mark all the edges belonging to $\Upsilon$ with 0. Regardless of how we mark the rest of the edges, $\Upsilon$ contains a continuum of minimal paths.

    Next, let $\Psi$ be the set of edges of the form $((i,j),(i,j+1))$,  not belonging to $\Upsilon$, where $(i,j)\in \mathbb N\times\mathbb Z_+$ is such that $$((i-1,j+1),(i,j+1))$$ does not belong to $\Upsilon$ either. Mark all edges from $\Psi$ by 0 as well. In addition, all the edges belonging to the coordinate axis (the sides of the Pascal graph) are marked with 0 automatically. Mark all other edges by 1. We claim that such numbering of the edges of the Pascal graph has only countably many maximal paths.

    Indeed, observe that by construction for each $n\in\mathbb Z_+$ the line $y=n$ (containing all vertices of the form $(i,n)$, $i\in\mathbb \mathbb Z_+$)) intersects the union of edges marked by 0 by at most one segment. Namely, when $n=4^k-4j$ for some $k\in\mathbb N$ and $1\leqslant j\leqslant 2^{k-1}$. Since such $n$ is necessarily even (in fact, divisible by four), we obtain by definition of $\Psi$ that every edge of the form $((i,2j),(i,2j+1))$ is marked with 0 for $(i,j)\in\mathbb Z_+\times\mathbb Z_+$.
    
    Let $(i,j)\in\mathbb Z_+\times\mathbb Z_+$, $(i,j)\neq (0,0)$ be a branching point for maximal paths, i.e. both edges $((i,j),(i+1,j))$ and $((i,j),(i,j+1))$ are marked by 1. Since $((i,j),(i,j+1))$ does not belong to $\Psi$, we conclude that either $i=0$ or $((i-1,j+1),(i,j+1))$ belongs to $\Upsilon$. In the latter case, $j+1$ is even. By construction, all the edges of the form $((l,j+1),(l,j+2))$, $l\in \mathbb Z_+$, are marked with 0. Thus, any maximal path passing through $(i,j+1)$ may continue only along the line $y=j+1$ to infinity without any further branching possible. We obtain that any maximal infinite path may contain at most two branching points. Since for any two branching points, only two maximal infinite paths might pass through both of them, there are only countably many maximal infinite paths.
\end{proof}

The following proposition about the measure of minimal and maximal infinite paths is true from general observations for other diagrams (see \cite[Lemma 2.7]{BezuglyiKwiatkowskiMedynetsSolomyak2010}), and was proved in \cite[Proposition 2.1]{Fricketal2017}. To illustrate the ideas, we present here direct calculations for the Pascal-Bratteli diagram. 
\begin{prop}
    Let $B$ be a Pascal-Bratteli diagram. Then for any ordering on $B$ and any non-atomic probability ergodic invariant measure $\mu$, the set of minimal infinite paths $X_{\min}$ and the set of maximal infinite paths $X_{\max}$ have $\mu$-measure zero.
\end{prop}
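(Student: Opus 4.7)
The approach I would take exploits the fact that in any ordered Bratteli diagram, through each vertex $v \in V_n$ there passes \emph{at most one} finite maximal path from $v_0$ to $v$, obtained by iterated backward tracing along the (uniquely specified) maximal incoming edge at each vertex; at boundary vertices where only one incoming edge exists, the choice is trivial. Denote the resulting finite maximal path ending at $v$ by $\ov e_v$. Since every infinite maximal path must extend one such $\ov e_v$, we have
\[
X_{\max} \;\subseteq\; \bigcup_{v \in V_n} [\ov e_v]
\]
for every $n \geq 1$, and hence $\mu(X_{\max}) \leq \sum_{v \in V_n} \mu([\ov e_v])$. An entirely analogous statement holds for $X_{\min}$ after replacing ``maximal'' by ``minimal'' throughout.

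By the classification of ergodic tail-invariant probability measures on the Pascal-Bratteli diagram, a non-atomic such measure $\mu$ must coincide with $\nu_p$ for some $p \in (0,1)$. The key observation is that the $\nu_p$-measure of the cylinder $[\ov e_v]$ of a \emph{single} finite path from $(0,0)$ to $v = (i, n-i)$ equals $p^i (1-p)^{n-i}$, \emph{without} the binomial coefficient $\binom{n}{i}$ that would appear if we instead summed over all finite paths ending at $v$. Consequently,
\[
\nu_p(X_{\max}) \;\leq\; \sum_{i=0}^{n} p^i (1-p)^{n-i}
\]
for every $n \geq 1$.

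The final step is to let $n \to \infty$. When $p = 1/2$, the right-hand side equals $(n+1) 2^{-n} \to 0$. When $p \neq 1/2$, the geometric summation yields $(p^{n+1} - (1-p)^{n+1})/(2p-1)$, which also tends to $0$ since both $p$ and $1-p$ lie strictly between $0$ and $1$. Hence $\nu_p(X_{\max}) = 0$, and the same reasoning applied to unique finite minimal paths ending at each $v \in V_n$ gives $\nu_p(X_{\min}) = 0$.

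I do not anticipate a real obstacle: the argument is a short direct calculation valid for \emph{every} choice of ordering, since the ordering enters only in the specification of the unique $\ov e_v$ at each vertex. The only substantive point is recognizing that the level-$n$ upper bound is a \emph{geometric} rather than binomial sum — precisely because one counts one path per vertex rather than all paths ending there — and therefore decays exponentially instead of being identically $1$.
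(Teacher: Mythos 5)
Your proposal is correct and follows essentially the same route as the paper: both arguments rest on the observation that each vertex of $V_n$ supports exactly one maximal (resp.\ minimal) finite path from the root, so that $X_{\max}$ is covered by $n+1$ cylinders of total $\nu_p$-measure $\sum_{i=0}^{n} p^i(1-p)^{n-i}$, which is a geometric rather than binomial sum and tends to $0$. The only cosmetic difference is that the paper bounds this sum by $(1-p)^n\frac{1-p}{1-2p}$ for $p<\tfrac12$ and uses the symmetry $p\leftrightarrow 1-p$, whereas you evaluate it in closed form as $(p^{n+1}-(1-p)^{n+1})/(2p-1)$; both yield the same conclusion.
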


\begin{proof}
    Let $\mu = \nu_p$, where $p \in (0,1)$ (see Section \ref{Sect:pos_meas_sbd_standard_Pascal}), be any non-atomic ergodic probability invariant measure on $B$ and $\omega$ be any order on $B$. 
    We say that a cylinder set is a minimal cylinder set of length $n$ if it corresponds to a minimal finite path of length $n$. Let $X_{\min}^{(n)}$ be a union of minimal cylinder sets of length $n$. Then 
    $$
    X_{\min}^{(n)} \supset X_{\min}^{(n+1)} \mbox{ and } X_{\min} = \bigcap_{n = 1}^{\infty} X_{\min}^{(n)}.
    $$
    Since for every $n \in \mathbb{N}$ and every vertex $w \in V_n$ there is a unique path which joins $w$ with $V_0$, for $p < \frac{1}{2}$, we have
    $$
    \mu(X_{\min}^{(n)}) = \sum_{k = 0}^{n} p^k(1-p)^{n-k} = (1 - p)^n \sum_{k = 0}^{n}\left(\frac{p}{1-p}\right)^{k} \leq (1 - p)^n \frac{1}{1 - \frac{p}{1-p}} = (1 - p)^n\frac{1-p}{1 - 2p} \rightarrow 0
    $$
    as $n$ tends to infinity. By switching $p$ and $1 - p$  we obtain that $\mu(X_{\min}^{(n)})$ tends to $0$ as $n$ tends to infinity for $p > \frac{1}{2}$.
    We also have
    $$
    \mu(X_{\min}^{(n)}) = \sum_{k = 0}^{n} \frac{1}{2^n} = \frac{n+1}{2^n}, \text{ for } p = \frac{1}{2}.
    $$ 
    Thus, for all $p \in (0,1)$ we have
    $$
    \mu(X_{\min}) = \lim_{n \rightarrow \infty}\mu(X_{\min}^{(n)}) = 0. 
    $$
    Similarly, $\mu(X_{\max}) = 0$.
\end{proof}

\begin{remark}
    In \cite{FrickPetersenShields}, the authors study the so-called polynomial shape Bratteli diagrams, which are generalizations of a Pascal-Bratteli diagram. They show that for such diagrams, the sets of minimal infinite and maximal infinite paths have measure zero for a fully supported ergodic invariant probability measure (see \cite[Proposition 3.6]{FrickPetersenShields}). It is also shown that under some mild conditions, the orbits of the sets of minimal infinite and maximal infinite paths are meager (see \cite[Proposition 2.3, Remark 2.4]{FrickPetersenShields}). 
\end{remark}

\section{Stationary generalized Pascal-Bratteli diagrams}\label{Sect:meas_GBD_Pascal}

In this section, we consider generalized Bratteli diagrams that are formed 
by countably many overlapping Pascal triangles. To the best of our knowledge, these diagrams have not been considered before.
\subsection{One-sided stationary generalized Pascal-Bratteli diagram.}
First, we focus on a one-sided stationary generalized Bratteli diagram $B$ that has infinitely many Pascal-Bratteli diagrams as vertex subdiagrams and the
$\mathbb{N} \times \mathbb{N}$ incidence matrix
$$
F = \begin{pmatrix}
    1 & 0 & 0 & 0 & \ldots\\
    1 & 1 & 0 & 0 & \ldots\\
    0 & 1 & 1 & 0 & \ldots\\
    0 & 0 & 1 & 1 & \ldots\\
    \vdots & \vdots & \vdots & \vdots & \ddots
\end{pmatrix}.
$$ 

\vspace{0.5cm}
\begin{figure}[hbt!]
\unitlength=.5cm
\centerline{\includegraphics[width=0.27\textwidth]{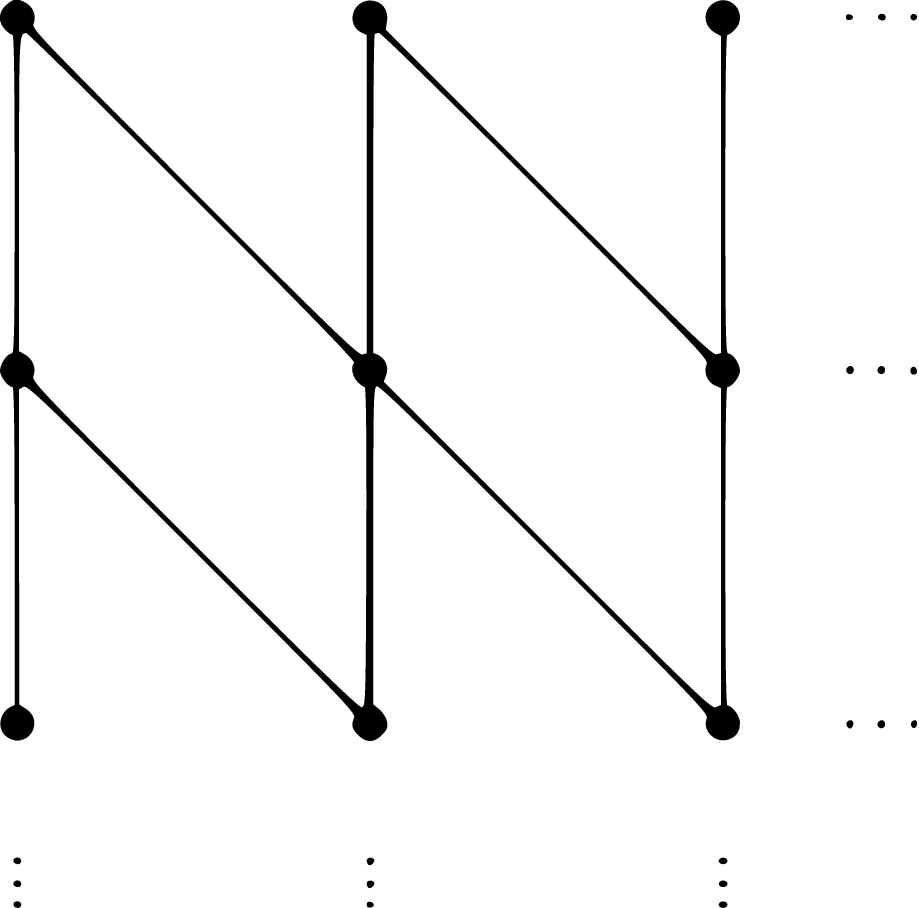}}
\caption{A stationary generalized Bratteli diagram with infinitely many Pascal subdiagrams}\label{Fig:GBD_inf_Pascal_N}
\end{figure}

To be consistent with other papers on generalized Bratteli diagrams, we draw it so that the vertices of consecutive levels are aligned one below another, see Figure \ref{Fig:GBD_inf_Pascal_N}. Diagram $B$ is stationary, and this property allows us to use the techniques from the papers \cite{BezuglyiJorgensenKarpelSanadhya2023}, \cite{BezuglyiKarpelKwiatkowski2024}. For any $i \in \mathbb{N}$, denote by $X_i$ the set of all paths in $X_B$ that start at vertex $i$ on level $V_0$. The sets $\{X_i\}_{i \in \mathbb{N}}$ form a clopen partition of $X_B$. Note that $X_i$ is also the path space of a vertex subdiagram $B_i$ of $B$ that is supported by the set of vertices $W = (W_n)_{n = 0}^{\infty}$, where $W_n = \{i, \ldots, i + n\}$. Obviously, each diagram $B_i$ is the classical Pascal-Bratteli diagram. 
It is well known
that there are uncountably many ergodic probability tail invariant measures $\nu_p^{(i)}$, $0 < p < 1$, on $B_i$ (see Section \ref{Sect:pos_meas_sbd_standard_Pascal}). 
In the proposition below, we describe the invariant measures on $B$ given by eigenpairs for the incidence matrix (see e.g. \cite[Theorem 2.3.2]{BezuglyiJorgensen2022}) or \cite[Theorem 6.6]{BezuglyiJorgensenKarpelSanadhya2023}).

\begin{prop}
    Let $\lambda > 1$ and $\ov \xi_{\lambda} = (\xi_i)$ be the vector such that
    $$
    \xi_i = (\lambda - 1)^{i-1}, \quad i \in \mathbb{N}.
    $$
Then $F^T\ov \xi_{\lambda} = \lambda \ov\xi_{\lambda}$ and the eigenpair $(\ov \xi_{\lambda}, \lambda)$ defines a tail invariant measure $\mu_{\lambda}$. If $1 < \lambda < 2$ then $\mu_{\lambda}$ is finite. 
The measure $\mu_{\lambda}$ is infinite for $\lambda \geq 2$.
\end{prop}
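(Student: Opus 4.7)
The plan is to break the proof into three short pieces: a one-line verification of the eigenvalue equation, an invocation of the standard eigenpair-to-measure recipe, and a direct geometric-series calculation for the total mass.

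First, I would verify the eigenvalue equation $F^T \ov\xi_\lambda = \lambda \ov\xi_\lambda$ entrywise. The only nonzero entries in row $i$ of $F^T$ lie in columns $i$ and $i+1$ (both equal to $1$), so
$$
(F^T \ov\xi_\lambda)_i = \xi_i + \xi_{i+1} = (\lambda-1)^{i-1} + (\lambda-1)^i = (\lambda-1)^{i-1}\bigl(1 + (\lambda-1)\bigr) = \lambda\, \xi_i.
$$
Since $\lambda > 1$, we also have $\xi_i > 0$ for every $i$, so the eigenpair $(\ov\xi_\lambda, \lambda)$ is positive.

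Second, to produce $\mu_\lambda$ I would invoke the standard construction for stationary generalized Bratteli diagrams from \cite{BezuglyiJorgensenKarpelSanadhya2023, BezuglyiKarpelKwiatkowski2024} (cited just before the proposition): a positive eigenpair of $F^T$ with eigenvalue $\lambda$ determines a $\sigma$-finite tail invariant Borel measure by setting $\mu_\lambda([\ol e]) := \xi_v / \lambda^n$ on every cylinder set $[\ol e]$ of length $n$ with $r(\ol e) = v \in V_n$. The eigenvalue equation encodes consistency of this assignment when a cylinder is refined through the next level, and the dependence of $\mu_\lambda([\ol e])$ only on $r(\ol e)$ gives tail invariance in the sense of Definition \ref{def: tail inv meas}. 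In particular $\mu_\lambda$ is finite on each cylinder.

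Third, I would compute the total mass by exhausting $X_B$ by the clopen partition into the zero-length cylinders $[i]$ indexed by $V_0 = \mathbb{N}$. Each $[i]$ has mass $\mu_\lambda([i]) = \xi_i = (\lambda-1)^{i-1}$, whence
$$
\mu_\lambda(X_B) = \sum_{i=1}^\infty (\lambda-1)^{i-1}.
$$
For $1 < \lambda < 2$ this is a convergent geometric series with ratio in $(0,1)$, giving $\mu_\lambda(X_B) = (2-\lambda)^{-1} < \infty$. For $\lambda \geq 2$ the ratio is at least $1$, the series diverges, and $\mu_\lambda$ is infinite (though still $\sigma$-finite and finite on cylinders).

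The only real obstacle is bookkeeping around normalization: one must confirm that the recipe $\mu_\lambda([\ol e]) = \xi_v/\lambda^n$ of the general construction does yield $\mu_\lambda([i]) = \xi_i$ at level $n=0$, so that the total mass is indeed the unshifted geometric series $\sum_{i\ge 1}(\lambda-1)^{i-1}$ rather than a reindexed variant. A sanity check via level $n=1$ (using the path counts $H^{(1)}_1=1$, $H^{(1)}_v=2$ for $v\ge 2$) recovers the same value $1/(2-\lambda)$ for $1<\lambda<2$, confirming both the construction and the dichotomy at $\lambda = 2$.
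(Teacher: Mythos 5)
Your proposal is correct and follows essentially the same route as the paper: verify the eigenvalue relation for $\ov\xi_\lambda$ (the paper derives the vector recursively rather than checking it entrywise, a cosmetic difference), define $\mu_\lambda$ on cylinders by $\xi_v/\lambda^n$, and compute the total mass as the geometric series $\sum_{i\ge 1}(\lambda-1)^{i-1}$, which converges to $1/(2-\lambda)$ exactly when $1<\lambda<2$. The level-one sanity check is a nice extra but not needed.
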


\begin{proof}
    First, we find a non-negative vector $\ov \xi = (\xi_i)$ satisfying $F^T \ov \xi = \lambda \ov \xi$. It is easy to see that if $\xi_1 = 0$ then $\xi_i = 0$ for all $i \in \mathbb{N}$. Set $\xi_1 = 1$. Then $\xi_1 + \xi_2 = \lambda \xi_1$ and $\xi_2 = \lambda - 1$. Similarly, $\xi_3 = \lambda(\lambda - 1) - (\lambda - 1) = (\lambda - 1)^2$ and one can prove by induction that for all $i \in \mathbb{N}$,
    $$
    \xi_i = (\lambda - 1)^{i - 1}.
    $$
    Hence $\ov \xi = \ov \xi_{\lambda}$ is the eigenvector corresponding to $\lambda$. Set
    $$
    \mu_{\lambda}[\ov e^{(n)}(i)] = \frac{\xi_i}{\lambda^n} = \frac{(\lambda - 1)^{i-1}}{\lambda^n},
    $$
    where $[\ov e^{(n)}(i)]$ is a cylinder set corresponding to a finite path $\ov e^{(n)}(i)$ which ends at vertex $i \in V_n$. Clearly, if $1 < \lambda < 2$ then $\mu_{\lambda}$ is finite and
    $$
    \mu_{\lambda}(X_B) = \sum_{i = 0}^{\infty} (\lambda - 1)^i = \frac{1}{2 - \lambda}.
    $$
    If $\lambda \geq 2$ then $\mu_{\lambda}$ is infinite.
\end{proof}


\begin{lemma}
Let $i \in \mathbb{N}$ and $\lambda > 1$. Then the measure $\mu_{\lambda} |_{X_{B_i}}$ (after normalization) coincides with $\nu_p^{(i)}$.
\end{lemma}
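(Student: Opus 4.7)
The plan is a direct cylinder-set comparison. First I would verify that $X_{B_i}=X_i$. The shape of $F$ shows that a vertex $w\in V_{n+1}$ receives edges only from vertices $w-1$ and $w$ on $V_n$, so the vertices reachable from $i\in V_0$ in $n$ steps are exactly $\{i,i+1,\ldots,i+n\}=W_n$, which coincides with the vertex set of $B_i$ on level $n$. Thus every path starting at $i$ lies in $X_{B_i}$, and $\mu_\lambda(X_{B_i})=\mu_\lambda(X_i)=\xi_i=(\lambda-1)^{i-1}$. This normalizing constant is finite for every $i\in\mathbb N$ even when $\lambda\geq 2$, so $\mu_\lambda|_{X_{B_i}}$ normalizes to a probability measure, which I will denote $\tilde\mu_\lambda$.

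Next I would compute both candidate measures on an arbitrary cylinder set of $X_{B_i}$. For a finite path $\ov e^{(n)}(j)$ ending at $j\in V_n$ with $i\leq j\leq i+n$, dividing the defining formula for $\mu_\lambda$ by $(\lambda-1)^{i-1}$ gives
$$
\tilde\mu_\lambda\bigl([\ov e^{(n)}(j)]\bigr)=\frac{(\lambda-1)^{j-i}}{\lambda^n}.
$$
On the other side, a finite path from $i\in V_0$ to $j\in V_n$ in $B_i$ uses exactly $j-i$ label-incrementing edges and $n-(j-i)$ label-preserving edges, so by the Pascal conventions of Section~\ref{Sect:pos_meas_sbd_standard_Pascal},
$$
\nu_p^{(i)}\bigl([\ov e^{(n)}(j)]\bigr)=p^{j-i}(1-p)^{n-(j-i)}.
$$
Setting $p=(\lambda-1)/\lambda$, so that $1-p=1/\lambda$ and $p\in(0,1)$ since $\lambda>1$, these two expressions match factor by factor. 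Since both $\tilde\mu_\lambda$ and $\nu_p^{(i)}$ are Borel probability measures on $X_{B_i}$ agreeing on every cylinder set, they coincide.

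There is no substantial obstacle. The entire content of the lemma is the dictionary $p=1-1/\lambda$ between the eigenvalue $\lambda$ of $F^T$ and the Bernoulli parameter governing the ergodic probability measures on a standard Pascal-Bratteli diagram; once this dictionary is spotted, the proof is essentially a single line. The only points deserving minor care are the identification $X_{B_i}=X_i$ (ensuring that the normalizing constant is finite) and the bookkeeping of which of the two edge-directions from $w\in V_n$ in $B_i$ should be assigned weight $p$ versus $1-p$ when comparing with the classical Pascal diagram.
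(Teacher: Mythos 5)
Your proof is correct and follows essentially the same route as the paper's: normalize $\mu_\lambda|_{X_{B_i}}$ by $\xi_i=(\lambda-1)^{i-1}$, compute the measure of an arbitrary cylinder ending at $j\in V_n$, and recognize the Bernoulli form $p^{k}(1-p)^{n-k}$ with $k=j-i$. The only difference is the convention: you assign weight $p=(\lambda-1)/\lambda$ to the label-incrementing edges, while the paper takes $p=1/\lambda$ (so its finite-measure range $\lambda\in(1,2)$ corresponds to $p\in(\tfrac12,1)$ as used later in Theorem~\ref{Thm:all_mu_on_GBD_Pascal}); this is just the $p\leftrightarrow 1-p$ symmetry of the two identifications of $B_i$ with the Pascal triangle and does not affect correctness.
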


\begin{proof}
    Let $m_{\lambda}^{(i)}$ be the normalized measure $\mu_{\lambda} |_{X_{B_i}}$, that is
    $$
    m_{\lambda}^{(i)} = \frac{1}{(\lambda - 1)^i}\mu_{\lambda} |_{X_{B_i}}.
    $$
    We claim that $m_{\lambda}^{(i)} = \nu_p^{(i)}$ where $p = \frac{1}{\lambda}$. Indeed, let $\ov e^{(n)}(j)$ be a finite path from $i$ to $j \in V_n$. Then
    $$
    m_{\lambda}^{(i)}([\ov e^{(n)}(j)]) = \frac{(\lambda - 1)^{j - 1}}{(\lambda - 1)^{i-1}\lambda^n} = \frac{1}{\lambda^n}(\lambda - 1)^{j - i} = \frac{1}{\lambda^{n - j + i}}\left(\frac{\lambda - 1}{\lambda}\right)^{j - i}.
    $$
    Let $j - i = k$. Then 
    $$
    m_{\lambda}^{(i)}([\ov e^{(n)}(j)]) = p^{n-k}q^k,
    $$
    where 
    $$
    p = \frac{1}{\lambda}, \ \ q = \frac{\lambda - 1}{\lambda}.
    $$
    This means that $m_{\lambda}^{(i)} = \nu_p^{(i)}$ for $p = \frac{1}{\lambda}$.
\end{proof}

In the following theorem, we show that all ergodic invariant probability measures on $B$ can be obtained as extensions of the measures $\nu_p^{(i)}$ (see Section \ref{Sect:prelim}).

\begin{theorem}\label{Thm:all_mu_on_GBD_Pascal}
    For any $i \in \mathbb{N}$, the set of measures $\{\wh \nu_p^{(i)}, p \in (\frac{1}{2},1)\}$ is (after normalization) the set of all ergodic probability tail invariant measures on $B$.
\end{theorem}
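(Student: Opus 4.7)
My strategy is to identify each extension $\wh{\nu}_p^{(i)}$ with the eigenvector-constructed measure $\mu_{1/p}$ from the preceding proposition, and then invoke the classification of non-negative eigenvectors of $F^T$ to rule out any other ergodic probability tail invariant measures.

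For the first direction, I would fix $p \in (1/2, 1)$ and argue that $\wh{\nu}_p^{(i)} \propto \mu_{1/p}$. The lemma already yields $\mu_{1/p}|_{X_{B_i}} \propto \nu_p^{(i)}$, and since $\mu_{1/p}$ is itself tail invariant on $X_B$, the construction recalled in Section~\ref{Sect:prelim} implies that $\wh{\nu}_p^{(i)}$ and $\mu_{1/p}$ agree up to a positive scalar on $\wh{X}_{B_i}$. To promote this equality to all of $X_B$, I would check that $\mu_{1/p}(X_B \setminus \wh{X}_{B_i}) = 0$: the set $\wh{X}_{B_i}$ is tail invariant and has positive $\mu_{1/p}$-measure, so ergodicity of $\mu_{1/p}$ forces full measure. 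Alternatively, one can verify this directly via the strong law of large numbers, noting that the conditional transition probabilities at consecutive levels are i.i.d.\ equal to $p$ (stay at the same vertex) and $1-p$ (move up), so almost every path has asymptotic slope $1-p$ and hence eventually lies in the triangular region $\{v_n : i \le v_n \le i+n\}$ defining $B_i$. The proposition then gives $\mu_{1/p}(X_B) = (2 - 1/p)^{-1} < \infty$ precisely when $p \in (1/2, 1)$, so $\wh{\nu}_p^{(i)}$ admits a probability normalization exactly on this range; ergodicity is preserved by the extension procedure.

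For the converse direction, I would invoke the standard correspondence between ergodic probability tail invariant measures on stationary generalized Bratteli diagrams and non-negative eigenvectors of $F^T$ with finite total mass (cf.\ \cite[Theorem~6.6]{BezuglyiJorgensenKarpelSanadhya2023}). The equation $F^T \ov\xi = \lambda \ov\xi$ unfolds row by row into the recurrence $\xi_{j+1} = (\lambda - 1)\xi_j$, so the only non-negative solutions are positive multiples of $\ov\xi_\lambda$ with $\lambda > 1$, and finiteness $\sum_j (\lambda-1)^{j-1} < \infty$ forces $\lambda \in (1, 2)$. Combined with the identification from the first step, every ergodic probability tail invariant measure on $X_B$ coincides, after normalization, with $\wh{\nu}_p^{(i)}$ for some $p = 1/\lambda \in (1/2, 1)$.

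The main point requiring care is the density statement $\mu_{1/p}(\wh{X}_{B_i}) = \mu_{1/p}(X_B)$, which on a general generalized Bratteli diagram can fail, but here follows cleanly from stationarity and the Bernoulli structure of consecutive edges under $\mu_{1/p}$. A secondary subtlety is the choice of what to cite for the eigenvector characterization in the converse direction: if one wishes to avoid this citation, a self-contained substitute is to restrict $\mu$ to $X_{B_i}$, use ergodicity of $\mu$ on $X_B$ to show $\mu|_{X_{B_i}}/\mu(X_{B_i})$ is ergodic on $X_{B_i}$ and therefore equals some $\nu_p^{(i)}$, and then reconstruct $\mu$ from this restriction via tail invariance.
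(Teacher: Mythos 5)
Your forward direction is sound and in places more careful than the paper's own treatment: the paper identifies $\wh\nu_p^{(i)}$ with the eigenvector measure $\mu_{1/p}$ only implicitly through the preceding lemma, and never spells out why the extension of $\nu_p^{(i)}$ has full measure with respect to $\mu_{1/p}$ or why the range of $p$ is exactly $(\tfrac12,1)$. Your strong-law argument (consecutive edges are i.i.d.\ with probabilities $p$ and $1-p$ under $\mu_{1/p}$, so almost every path is eventually trapped in the triangle $\{i\le v_n\le i+n\}$) and your use of $\mu_{1/p}(X_B)=(2-1/p)^{-1}<\infty \iff p>\tfrac12$ supply exactly the quantitative content the paper leaves to the reader.

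The gap is in your primary converse. The correspondence you invoke --- that \emph{every} ergodic probability tail invariant measure on a stationary generalized Bratteli diagram arises from a non-negative eigenvector of $F^T$ --- is not a theorem in this generality. The cited result (Theorem 6.6 of \cite{BezuglyiJorgensenKarpelSanadhya2023}) goes only one way: an eigenpair \emph{produces} a tail invariant measure. A general tail invariant measure corresponds to a sequence of non-negative vectors $\ov p^{(n)}$ satisfying $\ov p^{(n)}=F^T\ov p^{(n+1)}$, and there is no a priori reason for $\ov p^{(n+1)}$ to be a scalar multiple of $\ov p^{(n)}$; the introduction of the present paper explicitly warns that the classification available for stationary \emph{standard} diagrams does not carry over to the generalized setting. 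So, as your main route stands, the converse direction assumes what is essentially the content of the theorem. Fortunately, the ``self-contained substitute'' you sketch in your last sentence is precisely the paper's proof: restrict an ergodic $\mu$ to a level-zero cylinder $X_u$ (which is the path space of a classical Pascal--Bratteli subdiagram), observe that the normalized restriction is ergodic and tail invariant there and hence equals some $\nu_p$, and recover $\mu=\wh\nu_p$ on the tail saturation by tail invariance; your forward-direction finiteness computation then forces $p>\tfrac12$ for $\mu$ to be normalizable. You should promote that substitute to the main argument and demote the eigenvector classification to a consistency check.
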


\begin{proof} 
Let $\mu$ be any ergodic invariant measure on $B$. Without loss of generality, assume that $\mu$ takes the value $1$ on a cylinder set $X_u =[e^{(0)}(u)]$ formed by infinite paths that start at a vertex $u \in V_0$. Note that each cylinder set of $B$ is a standard Pascal-Bratteli diagram. Hence, the restriction of $\mu$ onto $X_u$ must coincide with one of the measures $\nu_p$. Because the measure $\mu|_{X_u}$ coincides with 
$\nu_p$ on a set of positive measure, and both measures are tail invariant, the extensions of these measures must also 
coincide on the saturation of $X_u$ with respect to the tail equivalence relation. 
Thus, $\mu = \wh \nu_p$. 
\end{proof}


The following result is a corollary of Theorem \ref{thm: continuum Xmin}.

\begin{corol}\label{corol}
    There exists a stationary generalized Bratteli diagram together with a (non-stationary) order such that both the set of minimal paths $X_{\min}$ and the set of maximal paths $X_{\max}$ have the cardinality of the continuum.
\end{corol}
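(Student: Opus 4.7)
The plan is to lift Theorem \ref{thm: continuum Xmin} to the stationary generalized Bratteli diagram $B$ introduced in Section \ref{Sect:meas_GBD_Pascal} via its Pascal-Bratteli vertex subdiagram $B_1$ (supported on the vertex sets $W_n=\{1,\ldots,n+1\}$). Since $B_1$ is isomorphic to the classical Pascal-Bratteli diagram, Theorem \ref{thm: continuum Xmin} supplies a (non-stationary) order $\omega_1$ on the edges of $B_1$ whose sets $X_{\min}(B_1,\omega_1)$ and $X_{\max}(B_1,\omega_1)$ each have the cardinality of the continuum. The only thing left to do is to extend $\omega_1$ to an order $\omega$ on $B$ in such a way that these extreme paths remain extreme in $(B,\omega)$.

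Inspecting the incidence matrix $F$, one sees that the set of incoming edges in $B$ at a vertex $k\in W_n$ coincides with the set of incoming edges in $B_1$ whenever $1\leq k\leq n$; only the rightmost vertex $n+1\in W_n$ acquires exactly one extra incoming edge in $B$, namely the edge from the vertex $n+1\in V_{n-1}\setminus W_{n-1}$. I would define $\omega$ to coincide with $\omega_1$ at every vertex of $B_1$ with no additional incoming edge, and at each rightmost vertex $n+1\in W_n$ declare the $B_1$-edge minimal and the external edge maximal; at vertices of $B$ outside $B_1$ the order may be chosen arbitrarily. The resulting $\omega$ is a non-stationary order on $B$ whose restriction to the edges of $B_1$ coincides with $\omega_1$.

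It then remains to verify that the inclusions $X_{\min}(B_1,\omega_1)\hookrightarrow X_{\min}(B,\omega)$ and $X_{\max}(B_1,\omega_1)\hookrightarrow X_{\max}(B,\omega)$ are well defined. At every interior vertex $k\in\{2,\ldots,n\}$ of $W_n$ the orders $\omega$ and $\omega_1$ agree on the two incoming edges. At the leftmost vertex $1\in W_n$ the unique incoming edge is trivially both minimal and maximal. The only delicate case is the rightmost vertex $n+1\in W_n$, which is visited by every minimal path $((0,0),(N,0))\cup((N,0)+C_r)$ of Theorem \ref{thm: continuum Xmin} on its initial $x$-axis segment; at each such visit the path uses the $B_1$-edge, which $\omega$ labels $0$, so minimality is preserved. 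The maximal paths, obtained by reflection across $y=x$, live in a narrow cone around the $y$-axis of the Pascal triangle and hence only meet the leftmost vertices of $B_1$, where the extension has no effect.

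The step that I expect to require the most care is confirming that for each $r\in[0,1/4)$ the portion of $C_r$ after the shift by $(N,0)$ never revisits the right boundary of $B_1$, since a single such revisit would force the path to use the external edge and so fail to be minimal in $B$. This is precisely what condition $3)$ in the proof of Theorem \ref{thm: continuum Xmin} guarantees: $C_r$ stays within distance $2$ of the ray $L_r$, so for any $r>0$ its $y$-coordinate is strictly positive at all sufficiently large levels, and the shifted path $(N,0)+C_r$ leaves the $x$-axis for good. Combining both injections above with the trivial bound $|X_B|\leq\mathfrak{c}$ then yields $|X_{\min}(B,\omega)|=|X_{\max}(B,\omega)|=\mathfrak{c}$.
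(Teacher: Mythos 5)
Your proposal is correct and follows essentially the same route as the paper: transplant the order from Theorem \ref{thm: continuum Xmin} onto a Pascal vertex subdiagram of $B$ and extend it to the remaining edges. You are in fact more careful than the paper's own two-line argument, which says the other edges may be ordered ``in an arbitrary way'': your observation that only the rightmost vertices $n+1\in W_n$ acquire an extra incoming edge, and that the external edge there must be placed above the $B_1$-edge (while the maximal paths stay near the opposite, unaffected boundary), is exactly the point that makes the extension work and that the paper leaves implicit.
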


\begin{proof}
    By Theorem~\ref{thm: continuum Xmin}, there exists an ordering of a (standard) Pascal-Bratteli diagram such that it has continuum minimal infinite paths and continuum maximal infinite paths. Let $B$ be a stationary generalized Bratteli diagram with infinitely many Pascal subdiagrams. Pick any of its Pascal subdiagrams of level $0$ and enumerate the edges of the subdiagram using Theorem~\ref{thm: continuum Xmin}. All other edges of $B$ can be enumerated in an arbitrary way such that the order is well-defined. Then the obtained order on $B$ has continuum infinite number of both minimal paths and maximal paths.
\end{proof}

The following result is a corollary of Theorem \ref{thm: continuum Xmin countable Xmax}.

\begin{corol}\label{corol_cntbl}
    There exists a stationary generalized Bratteli diagram together with a (non-stationary) order such that the set of minimal paths $X_{\min}$ has the cardinality of the continuum and the set of maximal paths $X_{\max}$ is countably infinite.
\end{corol}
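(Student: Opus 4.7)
The plan is to adapt the proof of Corollary~\ref{corol}, using Theorem~\ref{thm: continuum Xmin countable Xmax} in place of Theorem~\ref{thm: continuum Xmin} and specifying the extension of the order beyond the chosen Pascal subdiagram more carefully than arbitrarily, so as to control the number of maximal paths.

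First, I would take the stationary generalized Bratteli diagram $B$ from Section~\ref{Sect:meas_GBD_Pascal}, pick the Pascal subdiagram $B_1$ starting at vertex $1\in V_0$ (whose left boundary has no external incoming edges in $B$), and transfer the ordering of Theorem~\ref{thm: continuum Xmin countable Xmax} from the standard Pascal diagram $P$ to $B_1$ via the canonical isomorphism. I would then extend the order to all of $B$ as follows: at each rightmost-boundary vertex $v=n+2$ of $B_1$ on level $n+1$, the $B_1$-incoming edge is an $x$-axis edge of $P$ (labeled $0$), so the external incoming edge must be labeled $1$ to obtain a valid linear order; at each vertex $v\geq n+3$ on level $n+1$ with both incoming edges external to $B_1$, I would label the \emph{staying} incoming edge (from source $v$ at level $n$) with $1$ and the \emph{going right} incoming edge (from source $v-1$) with $0$.

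The minimality count is straightforward: at every vertex of $B_1$ the minimum incoming edge in $B_1$ coincides with the minimum in $B$, so $X_{\min}(B_1)\subseteq X_{\min}(B)$, which combined with $|X_B|\leq 2^{\aleph_0}$ yields that $|X_{\min}(B)|$ has the cardinality of the continuum. For the maximal count from below, the countably infinitely many maximal paths of $B_1$ that avoid the rightmost vertex of $B_1$ at every level $\geq 1$, such as the $y$-axis of $P$ together with the paths that branch off the $y$-axis at each level $n\geq 1$, remain maximal in $B$.

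The main obstacle is ruling out contributions to $X_{\max}(B)$ from the subdiagrams $B_i$ with $i\geq 2$. With the ``staying is maximal'' choice at external vertices, every candidate maximal path starting at $v_0=i$ is forced to have $v_n=i$ for $n=0,1,\ldots,i-1$. At level $i$, such a path would enter the interior of $B_1$ at the Pascal vertex $(i-1,1)$; a direct inspection of $\Upsilon$, $\Psi$, and the automatic axis labels in the construction of Theorem~\ref{thm: continuum Xmin countable Xmax} shows that the downward edge $((i-1,0),(i-1,1))$ of $P$ is always labeled $0$ (belonging to $\Upsilon$ or to $\Psi$, since no rightward edge at height $b=1$ lies in $\Upsilon$). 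Hence the staying option is not maximal, while the only alternative (going right to $v=i+1$ at level $i$) uses an $x$-axis edge of $P$, also labeled $0$. Therefore no maximal continuation exists, $X_{\max}(B_i)=\emptyset$ for every $i\geq 2$, and combining this with the countable contribution from $B_1$ gives that $|X_{\max}(B)|$ is countably infinite.
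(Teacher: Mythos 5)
Your proposal is correct and follows essentially the same route as the paper: order the leftmost Pascal subdiagram $B_1$ by Theorem~\ref{thm: continuum Xmin countable Xmax} and order the remaining edges of $B$ from left to right (which is exactly your ``staying is maximal'' rule). The paper states the conclusion without further verification, whereas you additionally check the two points it leaves implicit --- that paths starting at a vertex $i\geq 2$ are forced down the column and then die at the Pascal vertex $(i-1,1)$ because $((i-1,0),(i-1,1))$ lies in $\Upsilon\cup\Psi$, and that, although the $x$-axis maximal path of $B_1$ is no longer maximal in $B$, the countably infinitely many maximal paths branching off the $y$-axis survive --- both of which are accurate.
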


\begin{proof}
   By Theorem~\ref{thm: continuum Xmin countable Xmax}, there exists an ordering of a (standard) Pascal-Bratteli diagram such that it has continuum minimal infinite paths and countably infinitely many maximal infinite paths. Let $B$ be the one-sided stationary generalized Bratteli diagram with infinitely many Pascal subdiagrams. Pick its leftmost Pascal subdiagram, i.e., the Pascal subdiagram which starts at the vertex $1$ of level $0$, and enumerate the edges of the subdiagram using Theorem~\ref{thm: continuum Xmin countable Xmax}. Enumerate all other edges of $B$ from left to right. Then the obtained order on $B$ has continuum minimal infinite paths and infinitely countably many maximal infinite paths.
\end{proof}

Note that for every order on $B$, the leftmost vertical path is simultaneously a minimal infinite path and a maximal infinite path, hence there is no order on $B$ such that the sets $X_{\min}$ and $X_{\max}$ are empty. Even if a generalized Bratteli diagram has a unique minimal and a unique maximal path, it does not necessarily follow that the corresponding Vershik map is a homeomorphism \cite[Theorem 4.7]{BezuglyiJorgensenKarpelSanadhya2023}. In the following we consider a two-sided stationary generalized Pascal-Bratteli diagram $\tl B$ and show in Proposition \ref{prop: no min max paths} that there is an order on $\tl B$ such that $X_{\min} = X_{\max} = \emptyset$, and hence the corresponding Vershik map is a homeomorphism.

\subsection{Two-sided infinite stationary generalized Pascal-Bratteli diagram.}
We can also consider a two-sided generalized Bratteli diagram 
$\tl B$ which has infinitely many Pascal-Bratteli diagrams as subdiagrams.
Diagram $\tl B$ has a $\mathbb{Z} \times \mathbb{Z}$ incidence matrix (we use the boldface to indicate 
the entries on the main diagonal):
$$
\tl F = \begin{pmatrix}
   \ddots & \vdots 
   & \vdots & \vdots & \vdots & \vdots & \udots\\
    \cdots & \textbf 1 & 0 &  0 & 0 & 0 & \cdots\\
    \cdots & 1 & \textbf 1 &  0 & 0 & 0 & \cdots\\
    \cdots & 0 &  1 & \textbf{1} &  0 &   0 & \cdots\\
    \cdots & 0 & 0 &  1 & \textbf 1 & 0 & \cdots\\
    \cdots & 0 & 0 & 0 & 1 & \textbf 1 & \cdots\\
    \udots & \vdots & \vdots & \vdots & \vdots & \vdots & \ddots\\
\end{pmatrix}.
$$ 
Diagram $\tl B$ is both vertically and horizontally stationary (see \cite{BJKK_HS_2024}). Corollaries \ref{corol} and \ref{corol_cntbl} also hold for the two-sided generalized Bratteli diagram $\tl B$. Indeed, to prove Corollary \ref{corol_cntbl}, it is enough to pick any Pascal subdiagram $B'$ of level 0 and enumerate the edges of the subdiagram using Theorem~\ref{thm: continuum Xmin countable Xmax}. All other edges of $\tl B$ which are to the right of $B'$ should be enumerated from left to right, and all edges to the left of $B'$ should be enumerated from right to left.

In the following proposition, we show that there is an order on 
$\tl B$ does not admit any minimal or maximal infinite paths.

\begin{prop}\label{prop: no min max paths} There exists an ordering on $\tl B$ that does not have minimal infinite or maximal infinite paths.
\end{prop}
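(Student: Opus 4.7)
I would encode the ordering on $\tl B$ by a function $b\colon \mathbb{Z}_{\geq 1}\times\Z\to\{0,1\}$, where $b(n,w)=0$ declares the edge from $(n-1,w)$ to $(n,w)$ to be the minimal one at $(n,w)$, and $b(n,w)=1$ declares the edge from $(n-1,w-1)$ to be minimal. With this encoding, an infinite minimal path of $\tl B$ is a sequence $(v_n)_{n\geq 0}$ in $\Z$ satisfying $v_n-v_{n-1}=b(n,v_n)\in\{0,1\}$ for every $n\geq 1$, and an infinite maximal path satisfies $v_n-v_{n-1}=1-b(n,v_n)$. Equivalently, infinite minimal (resp.\ maximal) paths are the infinite branches of the forest $G_{\min}$ (resp.\ $G_{\max}$) obtained from $\tl B$ by retaining, at each vertex of level $\geq 1$, only its minimal (resp.\ maximal) incoming edge. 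Each such tree is rooted at some vertex of $V_0=\Z$ and has out-degree at most $2$, so by K\"onig's lemma a tree has no infinite branch if and only if it is finite. The plan is therefore to build $b$ so that every tree rooted in $V_0$, in both $G_{\min}$ and $G_{\max}$, is finite.

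The main tool is a pair of ``kill blocks''. For $c\in\Z$ and $N\geq 1$, the \emph{min-kill block with parameters $(c,N)$} is defined by setting $b(n,v)=1$ for $v\leq c$ and $b(n,v)=0$ for $v>c$ over $N$ consecutive levels. A direct check shows that inside the block, $(n,c)$ has no minimal child, a vertex $(n,v)$ with $v<c$ has the unique minimal child $(n+1,v+1)$, and a vertex with $v>c$ has the unique minimal child $(n+1,v)$. Hence every minimal path entering the block at position $v\leq c$ shifts right one step per level until it reaches $c$ and dies, provided the block has length at least $c-v+1$. The \emph{max-kill block with parameters $(d,N)$}, defined symmetrically by $b(n,v)=0$ for $v\leq d$ and $b(n,v)=1$ for $v>d$, has analogous dynamics for maximal paths. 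Moreover, the max-dynamics inside a min-kill block (and dually) is also easy to describe: positions strictly below the barrier $c$ stay fixed, positions strictly above $c$ shift right by one per level, and branching occurs only at the single barrier vertex $v=c$. Consequently, a max tree passing through a min-kill block of length $N$ has its active-position set grow by at most $N$ new positions, so it remains finite.

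I then build $b$ by a diagonal induction on an enumeration $V_0=\{z_1,z_2,\ldots\}$, choosing an increasing sequence of boundary levels $0=L_0<L_1<L_2<\cdots$. The inductive invariant at the start of stage $k$ is: $b$ has been defined for levels $\leq L_{k-1}$, and for every $j<k$ the trees of $G_{\min}$ and $G_{\max}$ rooted at $z_j$ have no vertex at level $\geq L_{k-1}$ (and so remain extinct from that level onward). In stage $k$, I compute the finite set $P_k^{\min}\subset\Z$ of positions at level $L_{k-1}$ of the current min tree rooted at $z_k$ and append a min-kill block with $c_k=\max P_k^{\min}$ of length $\geq c_k-\min P_k^{\min}+1$; this kills every alive min path from $z_k$. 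During this block the max tree from $z_k$ evolves to a still-finite set $\tilde P_k^{\max}$ by the observation above. I then append a max-kill block with $d_k=\max\tilde P_k^{\max}$ of length $\geq d_k-\min\tilde P_k^{\max}+1$, killing every alive max path from $z_k$, and set $L_k$ to be the highest level reached. Previously killed trees remain extinct because they have no surviving vertex at level $L_{k-1}$ to propagate. Iterating over $k$ exhausts $V_0=\Z$, so every $v\in V_0$ has finite min and max trees, which gives $X_{\min}=X_{\max}=\emptyset$. The only real piece of bookkeeping is verifying that the max tree from $z_k$ stays finite as it crosses the min-kill block in stage $k$, which is exactly the content of the explicit dynamics computation outlined above.
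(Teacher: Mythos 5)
Your proof is correct, but the construction is genuinely different from the one in the paper. The paper works with sparse \emph{edge} sets: for each root $n$ and each label $i\in\{0,1\}$ it places an L-shaped barrier $\mathcal S(n,i)$ of edges, all labeled $i$, so positioned that every infinite path from $n^{(0)}$ must cross it; choosing the barrier scales $K(n,i)=3^{G(n,i)}$ makes all barriers pairwise disjoint, so the labeling is well defined and the rest of the order can be arbitrary. The conclusion there is slightly stronger than what you prove: \emph{every} infinite path from $n^{(0)}$ contains an edge of each label, hence no infinite path is monochromatic. You instead fully prescribe the order on horizontal bands of levels (your kill blocks, constant by column up to a threshold), track the finite active sets of the min- and max-forests rooted at each $z_k$, and invoke K\"onig's lemma to pass from ``every rooted tree in $G_{\min}$ and $G_{\max}$ is finite'' to $X_{\min}=X_{\max}=\emptyset$. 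What your approach buys: the encoding by $b(n,w)$ makes well-definedness of the order automatic (each vertex gets exactly one minimal and one maximal incoming edge by construction, with no disjointness condition to verify), and the dynamics inside a block is a transparent one-line case check. The cost is the sequential bookkeeping — you must verify that the max tree survives a min-kill block with a still-finite active set, and that previously extinguished trees stay extinguished — whereas the paper handles all roots and both labels simultaneously by separating the barriers in scale. Both arguments are complete; your interleaved-stage construction is a valid alternative, and the key computation (the three-case description of the min- and max-dynamics across a threshold $c$) checks out.
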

\begin{figure}[ht]
\centerline{\includegraphics[width=0.5\textwidth]{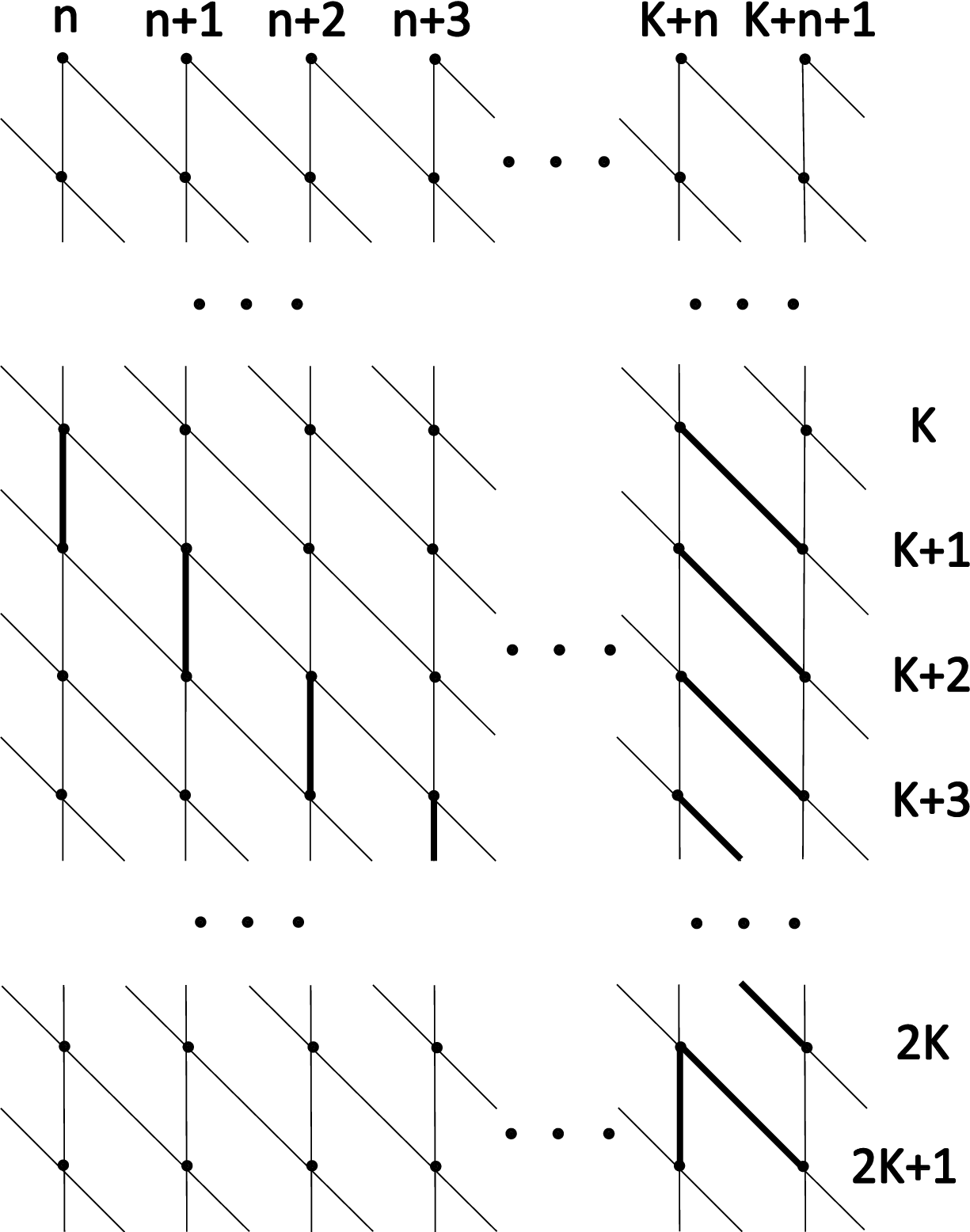}}
\caption{\label{fig:no-min-max-paths}Illustration to the proof of Proposition~\ref{prop: no min max paths}: edges from $\mathcal S(n,i)$ are in bold}
\end{figure}
\begin{proof} Fix an injective map $G$ from $\mathbb Z\times \{0,1\}$ to $\mathbb N$. For each $(n,i)\in\mathbb Z\times\{0,1\}$ set  $K=K(n,i)=3^{G(n,i)}$. For $n\in\mathbb Z,\ k\in\mathbb Z_+$, let $n^{(k)}$ be the $n$-th vertex of the $k$-th level of the generalized Bratteli diagram $B$.  Let $\mathcal S(n,i)$ be the set of all the edges of the form $((l+n)^{(K+l)},(l+n)^{(K+l+1)})$
and of the form $((K+n)^{(K+l)},(K+n+1)^{(K+l+1)})$,  
 where $0\leqslant l\leqslant  K=K(n,i)$. Mark all the edges from $\mathcal S(n,i)$ with $i$. It is not hard to see that the sets $\mathcal S(n,i)$, $(n,i)\in\mathbb Z\times \{0,1\}$, are pairwise disjoint. Therefore, the above operation is well-defined. Number all the other edges in an arbitrary consistent way to obtain an ordering on $\tl B$.

For any top  vertex $n^{(0)}$, $n\in\mathbb N$, of the diagram $\tl B$ and any $i\in\{0,1\}$ any infinite path starting at $n^{(0)}$ passes through an edge of $\mathcal S(n,i)$. It follows that there are no minimal or maximal paths.
\end{proof}

For the two-sided generalized Bratteli diagram $\tl B$, for every $p \in (0,1)$, there is a positive right eigenvector (the boldface indicates the zero coordinate of the eigenvector):
$$
\mathbf{x}^T = \left(\ldots, \; \frac{(1-p)^2}{p^2},\; \frac{1-p}{p},\; \textbf{1},\; \frac{p}{1-p},\; \frac{p^2}{(1-p)^2}, \; \ldots\right) 
$$
of $\tl A = \tl F^T$ which corresponds to eigenvalue $\lambda = \frac{1}{p}$.
In other words, we have $\mathbf{x} = (x_i)_{i \in \mathbb{Z}}$ and $x_i = \frac{p^i}{(1-p)^i}$ for $i \in \mathbb{Z}$.
Indeed, for every $i \in \mathbb{Z}$ we have
$$
(\tl A\mathbf{x})_i = \left(\frac{p}{1-p}\right)^{i-1} + \left(\frac{p}{1-p}\right)^{i} = \frac{1}{p}\left(\frac{p}{1-p}\right)^{i} = \lambda x_i.
$$
In particular, for $p = \frac{1}{2}$, we obtain $\lambda = 2$ and $
\mathbf{x}^T = \left(\ldots, \; 1,\; 1,\; 1,\; \ldots\right). 
$
Note that for every $p \in (0,1)$, the corresponding eigenvector $\mathbf{x}$ has an infinite sum of coordinates and hence generates an infinite $\sigma$-finite measure.

\medskip
\textbf{Acknowledgements.} We extend our gratitude to K. Petersen for pointing out the connections and similarities between some results of \cite{Fricketal2017} and \cite{FrickPetersenShields}, and our results proved in Section \ref{Sect:X_max_X_min}. We are grateful to the reviewers for careful reading of the paper and valuable suggestions.
We are also thankful to our colleagues, 
especially, P. Jorgensen, J. Kwiatkowski, C. Medynets, T. Raszeja, and S. Sanadhya for the numerous valuable discussions.
S.B. and O.K. are grateful to the Institute of Mathematics of the Polish Academy of Sciences for their hospitality and support. 
O.K. is supported by the NCN (National Science Centre, Poland) Grant 2019/35/D/ST1/01375 and by the program ``Excellence initiative - research university'' for the AGH University of Krakow. A.D. acknowledges the funding by the Long-term program of support of the Ukrainian research teams at the Polish Academy of Sciences carried out in collaboration with the U.S. National Academy of Sciences with the financial support of external partners. A.D. was also partially supported by the National Science Centre, Poland, Grant OPUS21 "Holomorphic dynamics, fractals, thermodynamic formalism", 2021/41/B/ST1/00461.

\bibliographystyle{alpha}
\bibliography{bibliographyPascal}
\end{document}